  \newlength{\defbaselineskip}
  \newcommand{\setlinespacing}[1]%
                               {\setlenght{\baselineskip}{#1 \defbaselineskip}}
  \theoremstyle{plain}
  \newtheorem{thm}{Theorem}[section]
  \newtheorem{lem}[thm]{Lemma}
  \newtheorem{rem}[thm]{Remark}
  \theoremstyle{definition}
  \newtheorem{defi}[thm]{Definition}
  \newtheorem{exm}[thm]{Example}
\numberwithin{equation}{section}
\begin{document}
\begin{center}
{\bf  Bernstein's Lethargy Theorem in Fr\'{e}chet Spaces}
\end{center}
\vspace{.15 cm}
\begin{center}
\small{Asuman G\"{u}ven AKSOY  and Grzegorz LEWICKI}
\end{center}

  \date{September 26, 2014}
\mbox{~~~}\\
\mbox{~~~}\\
{\bf Abstract.} {\footnotesize In this paper we consider  Bernstein's Lethargy Theorem (BLT)  in the context of Fr\'{e}chet spaces. Let $X$ be an infinite-dimensional Fr\'echet space and let $\mathcal{V}=\{V_n\}$ be a nested sequence of subspaces of $ X$ such that $ \overline{V_n} \subseteq V_{n+1}$ for any $ n \in \mathbb{N}$ 
and 
$
X=\overline{ \bigcup_{n=1}^{\infty}V_n}.
$ 
Let $ e_n$ be a decreasing sequence of positive numbers tending to 0. Under an additional natural condition on $\sup\{\mbox{dist}(x, V_n)\}$, we prove that
  there exists $ x \in X$ and $ n_o \in \mathbb{N}$   such that
$$
\frac{e_n}{3} \leq \mbox{dist}(x,V_n) \leq 3 e_n
$$ 
 for any $ n \geq n_o$. By using the above theorem,  we  prove both Shapiro's \cite{Sha} and Tyuremskikh's \cite{Tyu}  theorems for  Fr\'{e}chet  spaces.  Considering  rapidly decreasing sequences, 
other versions of the BLT theorem in Fr\'{e}chet  spaces  will be discussed. We also give a theorem improving Konyagin's  \cite{Kon} result for Banach spaces.

\footnotetext{{\bf Mathematics Subject Classification (2000):}
41A25, 41A50, 41A65. \vskip1mm {\bf Key words: } Best  Approximation, Bernstein's Lethargy Theorem, Fr\'{e}chet Spaces.}

\section{ Introduction}
Let $C[0,1]$ denote the space of real valued continuous functions on $[0,1]$ with the supremum norm $|| . ||$. For $f \in C[0,1]$, the sequence of  best approximations  (or equivalently  the distance from $f$  to the linear subspace of polynomials $P_n$ of degree $\leq n$), are defined as: 
$$\mbox{dist}(f,P_n) =\rho(f, P_n)= \rho_n(f)= \inf \{ || f-p||:\,\,\, p\in P_n\}.$$

Clearly  $ \rho(f, P_1) \geq\rho(f,P_2)\geq \cdots $ and  $\{\rho(f,P_n)\}$ form a nonincreasing sequence of best approximations.  Furthermore we have, 
\renewcommand{\labelenumi}{\alph{enumi})}

\begin{enumerate}
\item $ \rho_n(\lambda f)= |\lambda| \rho_n(f)$
\item $\rho_n(f+v)= \rho_n(f)$ for $f\in C[0,1]$ and $v\in P_n$
\item $\rho_n(f_1+f_2)\leq \rho_n(f_1)+\rho_n(f_2)$ and $ |\rho_n(f_1)-\rho_n(f_2)| \leq \|f_1-f_2\|$ for $f_1,f_2 \in C[0,1]$. 
\end{enumerate}
The last property implies the continuity of the mapping from $C[0,1]$ to $\mathbb{R}^+$ defined as $f \rightarrow \rho_n(f)$.

The density of polynomials in $C[0,1]$ implies that $$\displaystyle \lim_{n \to \infty} \rho_n(f) = 0.$$ However, the Weierstrass approximation theorem gives no information about the speed of convergence for $\rho_n(f)$.   Bernstein, considered ``the inverse problem of the theory of best approximation" and showed that for each nonincreasing, null sequence $(d_n)$, there exist a function $f\in C[0,1]$ with $\rho_n(f)= d_n$, see  \cite{Ber}. This theorem is sometimes referred as Bernstein's Lethargy Theorem (BLT) and it has been applied to the theory of quasi analytic functions in several complex variables \cite{Ple2} and used in the constructive theory of functions \cite{Sin}.  Following the proof of Bernstein, Timan \cite{Tim} extended his result to an arbitrary system of strictly embedded \textit{finite dimensional} subspaces $Y_n$.  Later Shapiro, \cite{Sha}, replacing $C[0,1]$  with an arbitrary Banach space $(X, ||.||)$ and the sequence of $n$-dimensional subspaces of polynomials of degree $\leq n$ by a sequence $(Y_n)$ where $Y_1 \subset Y_2 \subset \cdots$ are strictly embedded \textit{closed subspaces} of $X$, showed that in this setting, for each null sequence $(d_n)$ of non-negative numbers, there is a vector $x\in X$ such that 
$$ \rho_n(x)=\inf \{||x-u||:\,\,\, u\in Y_n\} \neq O (d_n).$$
Thus there is no $M>0$ such that $\rho_n(x) \leq M d_n$. In other words $\rho_n(x)$ decay arbitrarily slowly. This result  was strengthened by Tyuremskikh \cite{Tyu} who established that the sequence of best approximations may converge to zero at an arbitrary slow rate:  for any expanding sequence $\{Y_n\}$ of subspaces and for any sequence $\{d_n\}$ of positive numbers converging to zero, he constructed an element $x\in X$  such that $\displaystyle\lim_{n \rightarrow \infty} \rho(x, Y_n) =0$ and $\rho(x, Y_n) \geq n$.  However, it is also possible that the sequence of best approximations may converge to zero arbitrarily fast. For example in \cite{Al-To}, under same conditions imposed on $\{Y_n\}$ and $\{d_n\}$, for any sequence $\{c_n\}$ of positive numbers converging to zero, there exists an element $x\in X$ such that $\displaystyle\lim_{ n \rightarrow \infty} \frac{\rho(x, Y_n)}{d_n} =0$ but $\displaystyle \frac{\rho(x, Y_n)}{d_n}  \neq  O (c_n).$ For a generalization of Shapiro's theorem we refer the reader to \cite{Al-Oi} and an application of Tyuremskikh's theorem to convergence of sequence of bounded linear operators consult \cite{De-Hu}. For other versions of Bernstein's Lethargy Theorem see \cite{Ak-Al}, \cite{Ak-Le},
\cite{Al-To}, \cite{Lew1}, \cite{Pli}.\\

These ideas bring us to the following question:
given a Banach space $X$, a strictly increasing sequence $\{Y_n \}$ of subspaces of $X$ with a dense union, and a non-increasing null sequence $\{d_n\} \subset  [0,\infty)$, does there exist $x \in X$ such that$\mbox{ dist}(x, Y_n) = d_n $ for each $n$?\\
The only known spaces $X$ in which the answer is always ``yes" are the Hilbert spaces, \cite{Tyu2}.  For a general (separable) Banach space $X$, a solution $x$ is known to exist whenever all $Y_n$  are finite-dimensional \cite{Tim}. 
Moreover, it is known that  if $X$ has  the above property, then it is reflexive, \cite{Tyu2}.   Later,  progress on this question was made by Borodin \cite{Br}. Borodin establishes the existence of such an element in case of rapidly decreasing sequences:
\begin{thm}[Borodin]
  \label{thm:Borodin}
  Let $X$ be an arbitrary infinite-dimensional Banach space, $Y_1 \subset Y_2
  \subset \cdots$ be an arbitrary countable system of strictly nested subspaces
  in $X$, and fix a numerical sequence $\{e_n\}_{n=1}^\infty$ such that $e_n >
  \sum_{k = n+1}^\infty e_k$ for all natural $n \ge n_0$ with $e_n > 0$. Then there is an element $x \in X$ such that $$\rho(x,Y_n) = e_n,$$  for $n =
  1,2,\ldots$.
\end{thm}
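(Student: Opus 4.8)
The plan is to realize $x$ as the sum of a rapidly convergent series $x=\sum_{n\ge n_0}z_n$ with $z_n\in Y_n$, assembled one term at a time. First I would observe that the hypothesis $e_n>\sum_{k>n}e_k$ forces $\sum_k e_k<\infty$, so that any series whose terms obey $\|z_n\|\le e_{n-1}$ converges in norm; it also lets me ignore the finitely many indices below $n_0$ and assume the estimate holds throughout. Since replacing each $Y_n$ by its closure leaves every distance $\rho(\,\cdot\,,Y_n)$ unchanged, I may assume the $Y_n$ are closed and, being strictly nested, that $\overline{Y_{n-1}}$ is a proper closed subspace of $Y_n$; this is precisely what is needed for Riesz's lemma. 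The two tools are therefore Riesz's lemma, to produce at each level a vector $z_n\in Y_n$ whose distance to $Y_{n-1}$ is as close to $\|z_n\|$ as I wish, and the Hahn--Banach theorem, to supply norm-one functionals $f_n$ vanishing on $Y_n$ that certify lower bounds for the distances.

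I would choose $z_n\in Y_n$ inductively, together with functionals $f_n\in X^{*}$ with $\|f_n\|=1$ and $f_n|_{Y_n}=0$, so that the next term $z_{n+1}$ is almost orthogonal to $Y_n$, i.e.\ $f_n(z_{n+1})\ge(1-\delta_n)\|z_{n+1}\|$ with $\delta_n\downarrow0$ taken summably small. Writing $s_n=\sum_{k\le n}z_k\in Y_n$ and $w_n=\sum_{k>n}z_k$, the identity $\rho(x,Y_n)=\rho(w_n,Y_n)$ (valid because $s_n\in Y_n$) splits the task into two estimates. For the upper estimate, tying the norms to the gaps $\|z_k\|=e_{k-1}-e_k$ gives a telescoping bound $\rho(x,Y_n)\le\|w_n\|\le\sum_{k>n}\|z_k\|=e_n$. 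For the lower estimate, testing against $f_n$, which kills $z_k$ for $k\le n$, yields $\rho(x,Y_n)\ge f_n(x)=f_n(z_{n+1})+\sum_{k>n+1}f_n(z_k)\ge(1-\delta_n)(e_n-e_{n+1})-e_{n+1}$, where the hypothesis $e_n>\sum_{k>n}e_k$ is exactly what keeps the dominant term $z_{n+1}$ from being drowned by its entire tail. Thus this naive choice already delivers the correct order of decay and a clean two-sided bound, but the lower bound falls short of $e_n$ by a quantity of order $e_{n+1}$.

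The delicate point — and the step I expect to be the real obstacle — is closing this shortfall, that is, upgrading the approximate control into the \emph{exact} equalities $\rho(x,Y_n)=e_n$ for all $n$ simultaneously in a space with no inner product. Because the later terms $z_k$ ($k>n$) perturb the already-arranged value of $\rho(\,\cdot\,,Y_n)$, one cannot fix each level and walk away. Instead I would keep the norm $\|z_{n+1}\|$ as a free parameter, slightly larger than the gap $e_n-e_{n+1}$, and select it by a continuity (intermediate-value) argument so that $\rho(s_{n+1},Y_n)$ lands exactly on $e_n$, while the rapid-decay hypothesis bounds, uniformly in $n$, how far the not-yet-chosen tail can ever move that value. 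Passing to the limit $x=\lim_n s_n$ and combining the telescoping upper bound with the functional lower bound then pins each distance to $e_n$. The crux throughout is that $e_n>\sum_{k>n}e_k$ makes every newly added, almost-orthogonal term dominate the cumulative effect of all its successors, which is exactly what converts the approximate estimates into the exact deviations $\rho(x,Y_n)=e_n$.
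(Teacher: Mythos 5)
The paper does not actually prove Theorem \ref{thm:Borodin}; it quotes it from Borodin's article \cite{Br} and later uses only the structural fact that Borodin's element has the form $\sum_n\lambda_nq_n$ with $\|q_n\|\le 2$ and $|\lambda_n|\le e_n$. So your attempt must be measured against Borodin's argument and against the paper's own Fr\'echet-space analogue of it (Lemma \ref{crucial} and Theorem \ref{rapidlydecr1}), whose mechanism is the relevant one. Measured that way, there is a genuine gap, and it sits exactly where you flag ``the real obstacle.'' Your construction is a single forward induction: each $z_{n+1}\in Y_{n+1}$ is chosen once and for all, and $\rho(x,Y_n)=\rho(w_n,Y_n)$ where $w_n=\sum_{k>n}z_k$ is the entire not-yet-chosen tail. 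Tuning $\|z_{n+1}\|$ by an intermediate-value argument so that $\rho(s_{n+1},Y_n)=e_n$ pins the wrong quantity: once $z_{n+2},z_{n+3},\dots$ are added, $\rho(\cdot,Y_n)$ moves by up to $\sum_{k>n+1}\|z_k\|\approx e_{n+1}$, and bounding that perturbation ``uniformly in $n$,'' as you propose, yields only $|\rho(x,Y_n)-e_n|\lesssim e_{n+1}$ --- a two-sided estimate, never the exact equality the theorem asserts; and you cannot revisit $z_{n+1}$ after seeing the tail, since the tail depends on it. Your quantitative fallback is also weaker than you claim: the lower bound $(1-\delta_n)(e_n-e_{n+1})-e_{n+1}$ is useful only if $e_n>2e_{n+1}$ (up to $\delta_n$), and the hypothesis $e_n>\sum_{k>n}e_k$ does not imply this (take $e_n=1$, $e_{n+1}=0.6$, $e_{n+2}=0.3$ with a rapidly vanishing remainder: the hypothesis holds but your bound is negative), so the ``naive choice'' does not even give the right order of decay.

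The missing idea --- Borodin's, reproduced in Fr\'echet form in Lemma \ref{crucial} and Theorem \ref{rapidlydecr1} --- is to replace the single series by finite-stage \emph{exact} solutions. For each $n$ one builds $w_n=\sum_{j=1}^nq_{j,n}\in Y_{n+1}$ satisfying all the equalities $\rho(w_n,Y_j)=e_j$ for $j=1,\dots,n$ simultaneously, by working \emph{downward} in $j$: first $q_{n,n}$ is scaled so that $\rho_n=e_n$; then $q_{n-1,n}\in Y_n$ is added and scaled so that $\rho_{n-1}=e_{n-1}$, and, crucially, because $q_{n-1,n}$ lies in $Y_n$ it does not disturb the already-achieved value of $\rho_n$; and so on down to $j=1$. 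The hypothesis $e_n>\sum_{k>n}e_k$ enters precisely to keep $\|q_{j,n}\|$ summably small uniformly in $n$, so that (after a diagonal extraction, the $q_{j,n}$ being re-chosen at every stage) a subsequence $w_{n_k}$ converges to some $x$; the equalities $\rho(x,Y_j)=e_j$ then follow from the continuity of $\rho_j$ applied to a sequence on which $\rho_j$ is eventually constant, not from any decomposition of $x$ into a head in $Y_j$ plus a tail. Your Riesz/Hahn--Banach apparatus is fine for producing building blocks and lower bounds, but the reversed order of the induction and the limit-by-continuity step are what convert approximate control into exact deviations, and those are the ingredients your outline lacks.
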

Konyagin \cite{Kon} improved the result of Borodin by proving the following theorem:
\begin{thm}[Konyagin]
\label{thm:Konyagin}
Let $X$ be a real Banach space, $Y_1 \subset Y_2
  \subset \cdots$  a sequence of strictly embedded closed linear  subspaces of $X$, and $d_1 \geq d_2 \geq \cdots$  a nonincreasing sequence converging to zero, then there exist an element $x\in X$ such that
the distance $\rho(x, Y_n)$ from $x$ to $Y_n$ satisfies the inequalities
  $$ d_n \leq \rho(x, Y_n) \leq 8d_n \,\,\,\mbox{for}\,\,\, n=1,2,\cdots $$
\end{thm}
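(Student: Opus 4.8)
The plan is to construct the desired $x$ explicitly as a norm-convergent series $x=\sum_{k\ge 1}t_k z_k$, rather than to deduce the result from Borodin's theorem. A reduction to Theorem~\ref{thm:Borodin} would require sandwiching $\{d_n\}$ between a rapidly decreasing sequence $\{e_n\}$ with $d_n\le e_n\le 8d_n$; but when $\{d_n\}$ decays slowly (e.g.\ $d_n=1/n$) the tail $\sum_{k>n}e_k$ diverges, so no such $\{e_n\}$ exists. Thus the real content of the theorem is the slowly decaying case, and a direct construction appears unavoidable.

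First I would thin out the index set. Assuming $d_n>0$ for all $n$ (an eventually vanishing sequence being trivial), I set $n_0=1$ and let $n_{j+1}$ be the first index with $d_{n_{j+1}}\le q\,d_{n_j}$, where $q\in(0,1)$ is a fixed ratio to be optimized (a value near $1/4$ will turn out to be right). Since $d_n\to 0$ such indices always exist, and on each block $n_j\le n<n_{j+1}$ one has $q\,d_{n_j}<d_n\le d_{n_j}$, so $\{d_n\}$ varies by at most the factor $q$ inside a block. Next, using the strict inclusions of the \emph{closed} subspaces $Y_{n_k-1}\subsetneq Y_{n_k}$, Riesz's lemma supplies for each $k$ a unit vector $z_k\in Y_{n_k}$ with $\rho(z_k,Y_{n_k-1})\ge\alpha$ for any preassigned $\alpha<1$. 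Finally I set $t_k=\beta\,d_{n_{k-1}}$ with a scale $\beta>0$ to be chosen; the geometric decay of $d_{n_{k-1}}$ forces $\sum_k t_k<\infty$, so $x:=\sum_k t_k z_k$ is a well-defined element of the complete space $X$.

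The two inequalities then come from truncation. For $n$ in the block $[n_j,n_{j+1})$, the partial sum $\sum_{k\le j}t_k z_k$ lies in $Y_{n_j}\subseteq Y_n$, giving
\[
\rho(x,Y_n)\le\Big\|\sum_{k>j}t_k z_k\Big\|\le\sum_{k>j}t_k=\beta\sum_{i\ge j}d_{n_i}\le\frac{\beta}{1-q}\,d_{n_j}.
\]
For the lower estimate I would isolate the first omitted term: writing $x-y=t_{j+1}z_{j+1}-y'+\sum_{k>j+1}t_k z_k$ with $y'\in Y_{n_{j+1}-1}$ whenever $y\in Y_n$, the almost-orthogonality of $z_{j+1}$ to $Y_{n_{j+1}-1}$ yields
\[
\rho(x,Y_n)\ge\alpha\,t_{j+1}-\sum_{k>j+1}t_k\ge\beta\Big(\alpha-\tfrac{q}{1-q}\Big)d_{n_j}.
\]
Combining these with $q\,d_{n_j}<d_n\le d_{n_j}$ reduces the theorem to the two numerical requirements $\beta(\alpha-\frac{q}{1-q})\ge 1$ and $\frac{\beta}{q(1-q)}\le 8$; taking $\alpha$ close to $1$, $q$ close to $1/4$ and $\beta$ close to $3/2$ satisfies both simultaneously and produces exactly the constant $8$.

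I expect the main obstacle to be the \emph{lower} bound together with the simultaneous balancing of the three parameters $\alpha,q,\beta$. Controlling $\rho(x,Y_n)$ from below forces the leading omitted coefficient $t_{j+1}$ to dominate the whole tail $\sum_{k>j+1}t_k$, which is possible only if the selected values $d_{n_j}$ drop geometrically; this is precisely why the thinning with ratio $q$ is introduced, and why a naive one-vector-per-index construction fails for slowly decaying $\{d_n\}$. Getting the inequalities to close at the sharp constant $8$ rather than some larger value is the delicate point, and some care is needed in passing from the block-wise bounds, stated in terms of $d_{n_j}$, to the pointwise bounds in terms of $d_n$, using the monotonicity of both $n\mapsto\rho(x,Y_n)$ and $\{d_n\}$.
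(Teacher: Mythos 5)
First, on the relationship to the paper: the paper does not prove Theorem \ref{thm:Konyagin} at all --- it is quoted from \cite{Kon} --- and the closest result it does prove is Theorem \ref{Banach1}, which strengthens the statement to $\frac{e_n}{2}\le\rho(x,Y_n)\le 2e_n$. Your opening claim that a reduction to Borodin's theorem is impossible, so that ``a direct construction appears unavoidable,'' is mistaken. The correct reduction does not try to sandwich $\{d_n\}$ at every index by a rapidly decreasing sequence; it thins the chain of subspaces. One extracts indices $\{n_k\}$ and a geometric sequence $\{f_k\}$ with $f_k\ge 2f_{k+1}$ and $f_k$ comparable to $d_n$ on each block (this is exactly your thinning, with ratio $q=1/2$; it is the paper's Lemma \ref{Konyagin1}), applies (a mild modification of) Theorem \ref{thm:Borodin} to the thinned chain $\{Y_{n_k}\}$ to get $\rho(x,Y_{n_k})=f_k$ \emph{exactly}, and then interpolates for intermediate $n$ using the monotonicity of both $n\mapsto\rho(x,Y_n)$ and $\{d_n\}$. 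Because Borodin's theorem delivers equalities on the subsequence rather than Riesz-lemma estimates, the only loss is the block ratio, and the constant comes out as $2$, not $8$.

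Second, your direct construction has a genuine gap at the final step. The body of the argument is fine: the series converges, the truncation gives $\rho(x,Y_n)\le\frac{\beta}{1-q}d_{n_j}<\frac{\beta}{q(1-q)}d_n$, and the almost-orthogonality of $z_{j+1}$ to $Y_{n_{j+1}-1}$ gives $\rho(x,Y_n)\ge\beta\bigl(\alpha-\frac{q}{1-q}\bigr)d_{n_j}\ge\beta\bigl(\alpha-\frac{q}{1-q}\bigr)d_n$. But the two numerical requirements $\beta\bigl(\alpha-\frac{q}{1-q}\bigr)\ge1$ and $\frac{\beta}{q(1-q)}\le8$ cannot be met simultaneously with $\alpha<1$: substituting $\beta\le 8q(1-q)$ into the first yields $8q\bigl(\alpha(1-q)-q\bigr)\ge1$, whose maximum over $q$ equals $\frac{2\alpha^2}{1+\alpha}$, and $\frac{2\alpha^2}{1+\alpha}\ge1$ forces $\alpha\ge1$. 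Riesz's lemma supplies only $\alpha<1$ (and $\alpha=1$ can genuinely fail for a proper closed subspace of a Banach space), so your parameters close only in the limit: the argument proves the theorem with $8$ replaced by $8+\varepsilon$ for every $\varepsilon>0$, but not with $8$ itself. To land on a clean constant you need either exact distances along the subsequence --- which is precisely what the Borodin route provides, and more cheaply --- or a finer selection of the vectors $z_k$ than a bare application of Riesz's lemma.
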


The aim of this paper is to generalize Theorem \ref{thm:Konyagin} (see Theorem \ref{main} ) to the case of Fr\'{e}chet spaces. Since in general approximation theory for Fr\'{e}chet  spaces is quite different from  approximation theory in Banach spaces, it is natural to ask whether or not  Bernstein's Lethargy Theorem is true in this case.  There are several papers concerning Bernstein Lethargy Theorem  in the context of Fr\'{e}chet spaces; we refer the reader to \cite{Alb}, \cite{Ple1}, \cite{Lew} and \cite{Ak-Al}  for progresses  in this direction. Notice that a version of Bernstein's lethargy theorem for metrizable topological linear spaces was proved in \cite{Lew} and
Theorem 4.1 in \cite{Lew} has been applied to the theory of quasianalytic functions in modular function spaces. We refer the reader to \cite{Kozl}, \cite{Kozl-Lew} for results in this direction. 

The main result of this paper is  Theorem \ref{main}. By using Theorem \ref{main} we are able to prove both Shapiro's and Tyuremskikh's theorems for  Fr\'{e}chet  spaces  (see Theorem 2.11 and Theorem 2.12). Theorems 2.5, 2.6 and 2.14 are other versions of the BLT theorem for Fr\'{e}chet  spaces. We also give a theorem improving Konyagin's result for Banach spaces (see Theorem 2.18).

\begin{defi}

 $(X, \| \cdot \|)$ is called a \textit{Fr\'{e}chet  space}, if it is a metric linear space which is complete with respect  to 
 its F-norm $||.||_F$ giving the topology. As usual by an F-norm we mean that $||.||_F$   satisfies the following conditions, \cite{Rol}:
 \begin{enumerate}
 \item $||x||_F =0$ if and only if $x=0$,
 \item $||\alpha x||_F = ||x||_F$ for all real or complex  $\alpha$  with $||\alpha||=1$,
 \item $||x+y||_F \leq ||x||_F+||y||_F$,
 \end{enumerate}
 
 \end{defi}
Many Fr\'{e}chet spaces  $X$ can also be constructed using  a countable family of semi norms $||x||_k$ where $X$ is a complete space with respect to this family of semi-norms. For example a translation invariant complete metric inducing the topology on $X$ can be defined as $$d(x,y) = \displaystyle \sum_{k=0}^{\infty} 2^{-k} \displaystyle \frac{||x-y||_k}{1+||x-y||_k}\,\,\mbox{ for}\,\,\, x,y \in X.$$
Clearly, every Banach space is a Fr\'{e}chet space, and the other well known example of a Fr\'{e}chet space is the vector space $C^{\infty}[ 0,1]$ of all infinitely differentiable functions $f:[0,1] \to \mathbb{R}$ where the semi norms are $||f||_k = \sup \{|f^{k} (x) |: \,\, x\in [0,1]\}$. For more information about Fr\'echet spaces the reader is referred to \cite{Rol}.
\section{Results}
We start with three technical lemmas.
\begin{lem}
\label{compact}
Let $(X, \| \cdot \|)$ be a Fr\'{e}chet space, and let $G$ and $V$ be nonempty subsets of $X$. Assume that $G$ is a compact set. Then for any $ \delta >0$ there exists a finite set $ \{v_1, ..., v_n  \}
\subset V$ such that for any $g \in G $ 
$$
\| g -v_j\| \leq dist(g, V) + \delta 
$$
for some $ j \in \{1,...,n\}.$
\end{lem}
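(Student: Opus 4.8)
The plan is to reduce the statement to the compactness of $G$ together with the fact that the distance functional $g \mapsto \mbox{dist}(g,V)$ is $1$-Lipschitz for the F-norm. First I would record this Lipschitz property: for any $g,g' \in X$ and any $v \in V$, the triangle inequality (condition (3) in the definition of an F-norm) gives $\mbox{dist}(g,V) \le \|g-v\| \le \|g-g'\| + \|g'-v\|$; taking the infimum over $v \in V$ yields $\mbox{dist}(g,V) \le \|g-g'\| + \mbox{dist}(g',V)$, and by symmetry (condition (2) with $\alpha=-1$) one gets $|\mbox{dist}(g,V) - \mbox{dist}(g',V)| \le \|g-g'\|$. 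Note that only the triangle inequality and the symmetry $\|g-g'\|=\|g'-g\|$ are used, so the lack of homogeneity of the F-norm causes no difficulty here.

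Next, fix $\delta > 0$. For each $g \in G$, choose (by the definition of the distance, and since $V \neq \emptyset$) a point $v_g \in V$ with $\|g - v_g\| \le \mbox{dist}(g,V) + \delta/2$. I claim this single point works uniformly on a small ball around $g$: if $g' \in X$ satisfies $\|g'-g\| \le \delta/4$, then applying the triangle inequality, the choice of $v_g$, and the Lipschitz bound $\mbox{dist}(g,V) \le \|g'-g\| + \mbox{dist}(g',V)$ in succession gives
$$\|g' - v_g\| \le \|g'-g\| + \|g-v_g\| \le 2\|g'-g\| + \mbox{dist}(g',V) + \tfrac{\delta}{2} \le \mbox{dist}(g',V) + \delta,$$
where the last step uses $2\|g'-g\| \le \delta/2$. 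Thus the radius $\delta/4$ is chosen precisely to absorb the two error terms $2\|g'-g\|$ and $\delta/2$ into $\delta$.

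Finally I would invoke compactness. Since the F-norm induces the topology of $X$, the open balls $B(g,\delta/4) = \{g' \in X : \|g'-g\| < \delta/4\}$, $g \in G$, form an open cover of the compact set $G$, so there exist $g_1,\dots,g_n \in G$ with $G \subseteq \bigcup_{j=1}^n B(g_j,\delta/4)$. Setting $v_j := v_{g_j} \in V$, every $g \in G$ lies in some $B(g_j,\delta/4)$, and the displayed estimate (with $g' = g$ and the base point $g_j$ in place of $g$) gives $\|g - v_j\| \le \mbox{dist}(g,V) + \delta$, as required. The only genuinely substantive ingredient is the compactness of $G$, which upgrades the pointwise choice of near-best approximants into a single finite set valid for all of $G$; the inequalities themselves are routine once the covering radius is fixed.
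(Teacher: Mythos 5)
Your proof is correct and follows essentially the same route as the paper: choose a near-best approximant $v_g$ at each point, show it remains within $\delta$ of optimal on a neighborhood of $g$, and extract a finite subcover by compactness. The only difference is cosmetic — you make the neighborhood explicit (a ball of radius $\delta/4$, justified via the $1$-Lipschitz property of $g \mapsto \mathrm{dist}(g,V)$), whereas the paper obtains it from the continuity of the auxiliary function $h \mapsto \mathrm{dist}(h,V) + \delta - \|h - v_g\|$.
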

\begin{proof}
Fix $ \delta > 0$ and $g \in G.$ Choose $ v_g \in V$ such that $ \| g - v_g\| < dist(g,V) + \delta.$ Since the function $ f_g : G \rightarrow \mathbb{R}$ defined by 
$$
f_g(h) = dist(h,V) +\delta - \| h - v_g\| 
$$
is continuous, there exists an open neighborhood of $g,$ $ U_g \subset X$ such that $ f_g(h) >0 $ for any $h \in U_g.$
Notice that $ G \subset \bigcup_{g \in G} U_g. $ By the compactness of $G,$ $ G \subset \bigcup_{i=1}^n U_{g_i}.$ Hence for any $ h \in G$ 
$$
\| h - v_{g_i}\| < dist(g,V) + \delta
$$
for some $ i \in \{ 1,...,n\},$ which completes the proof.
\end{proof}
\begin{lem}
\label{series}
Fix $ \epsilon > 0.$ Let $ (e_n)$ be a sequence of nonnegative real numbers such that for any $ n \in \mathbb{N}$ 
$$ 
e_n \geq (2 + \epsilon)e_{n+1}.
$$ 
Set for $ n \in \mathbb{N}$ $ b_n = \sum_{j=n}^{\infty} 2^{j-n}e_j.$ 
Then $ \sum_{n=1}^{\infty} b_n < + \infty.$
\end{lem}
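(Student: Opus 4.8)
The plan is to exploit the geometric decay forced by the hypothesis $e_n \geq (2+\epsilon)e_{n+1}$ in order to dominate both the inner series defining $b_n$ and the outer series $\sum_n b_n$ by convergent geometric series. The key quantitative point is that the decay rate $2+\epsilon$ strictly exceeds the weight base $2$ appearing in $b_n$, so the weights $2^{j-n}$ are more than compensated.

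First I would record the elementary consequence of the recursion. Iterating $e_{n+1} \leq (2+\epsilon)^{-1}e_n$ gives $e_j \leq (2+\epsilon)^{-(j-n)}e_n$ for all $j \geq n$. Substituting this into the definition of $b_n$ and setting $k=j-n$ yields
$$
b_n = \sum_{j=n}^{\infty} 2^{j-n}e_j \leq e_n \sum_{k=0}^{\infty}\left(\frac{2}{2+\epsilon}\right)^k = \frac{2+\epsilon}{\epsilon}\,e_n,
$$
where the geometric series converges precisely because $\frac{2}{2+\epsilon}<1$. In particular each $b_n$ is finite and well defined, which also justifies rearranging terms later.

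Next I would sum over $n$. The same estimate, applied now with base index $1$, gives $e_n \leq (2+\epsilon)^{-(n-1)}e_1$, so
$$
\sum_{n=1}^{\infty} b_n \leq \frac{2+\epsilon}{\epsilon}\sum_{n=1}^{\infty} e_n \leq \frac{2+\epsilon}{\epsilon}\,e_1\sum_{n=1}^{\infty}\left(\frac{1}{2+\epsilon}\right)^{n-1} = \frac{(2+\epsilon)^2}{\epsilon(1+\epsilon)}\,e_1 < +\infty,
$$
which is the desired conclusion.

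There is really no serious obstacle here; the only care required is to keep the two index shifts straight and to confirm that the geometric ratios $\frac{2}{2+\epsilon}$ and $\frac{1}{2+\epsilon}$ are both strictly less than $1$, both of which hold because $\epsilon>0$. As an alternative route one could interchange the order of summation, which is legitimate since all terms are nonnegative, obtaining $\sum_{n=1}^{\infty} b_n = \sum_{j=1}^{\infty}(2^j-1)e_j$, and then verify $\sum_j 2^j e_j < \infty$ using the same geometric bound $e_j \leq (2+\epsilon)^{-(j-1)}e_1$.
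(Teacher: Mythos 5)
Your proof is correct. It rests on the same quantitative fact as the paper's --- that the decay ratio $\tfrac{2}{2+\epsilon}$ is strictly less than $1$ --- but the execution differs. The paper argues via the ratio test twice: first d'Alembert's criterion applied to $\sum_n 2^n e_n$ to see each $b_n$ is finite, and then the estimate $b_{n+1}/b_n \le \tfrac{1}{2+\epsilon}$ (obtained by reindexing the partial sums of $b_{n+1}$ and applying the hypothesis termwise) to conclude $\sum_n b_n < \infty$, again by the ratio test. You instead iterate the hypothesis to the explicit pointwise bound $e_j \le (2+\epsilon)^{-(j-n)}e_n$ and dominate everything by closed-form geometric series, arriving at the concrete bound $\sum_n b_n \le \tfrac{(2+\epsilon)^2}{\epsilon(1+\epsilon)}\, e_1$. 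Your route is more elementary and buys an explicit constant (which the lemma does not need, but costs nothing); the paper's ratio-test formulation is slightly slicker in that it never needs the closed-form sums. Your remark about interchanging the order of summation, giving $\sum_n b_n = \sum_j (2^j-1)e_j$, is also a valid third route by Tonelli for nonnegative terms. One small point worth stating explicitly if you write this up: the iteration $e_j \le (2+\epsilon)^{-(j-n)}e_n$ is harmless even when some $e_n=0$, since nonnegativity then forces all subsequent terms to vanish.
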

\begin{proof}
Notice that by d'Alambert's criterion $ \sum_{n=1}^{\infty}2^n e_n < \infty .$ Hence for any $ n \in \mathbb{N},$ $b_n \in \mathbb{R}.$
Observe that
$$
\frac{b_{n+1}}{b_n} = \frac{ \lim_m \sum_{j=n+1}^{m+1} 2^{j-n-1}e_j}{\lim_m \sum_{j=n}^{m} 2^{j-n}e_j}
$$ 
and for $ m \geq n,$
$$
\frac{\sum_{j=n+1}^{m+1} 2^{j-n-1}e_j}{\sum_{j=n}^{m} 2^{j-n}e_j} \leq \frac{\sum_{j=n}^{m} 2^{j-n}e_j}{(2+\epsilon)\sum_{j=n}^{m} 2^{j-n}e_j} =\frac{1}{2+\epsilon}
$$
which completes the proof.
\end{proof}
\begin{lem}
\label{three}
Let $ \{ e_n\}$ be a sequence of positive numbers such that $ e_n \geq 3e_{n+1}$ for any $ n \in \mathbb{N}.$ The for any $m,n \in \mathbb{N} \setminus \{0\},$ 
$$
e_n\geq \sum_{j=n+1}^{n+m} 2^{j-n-1} e_j + 2^{m}e_{n+m}
$$
\end{lem}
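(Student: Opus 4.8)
The plan is to proceed by induction on $m$, keeping $n$ arbitrary throughout. The key structural feature to exploit is the role of the ``tail term'' $2^m e_{n+m}$: it is calibrated precisely so that, when $m$ is increased by one, the hypothesis $e_n \geq 3 e_{n+1}$ absorbs the newly produced terms exactly. For the base case $m=1$ the right-hand side collapses to $2^0 e_{n+1} + 2^1 e_{n+1} = 3 e_{n+1}$, so the desired inequality is nothing but the assumption $e_n \geq 3 e_{n+1}$ itself.

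For the inductive step I would not re-derive the bound from scratch but instead compare the right-hand side for $m+1$ against that for $m$. Writing
$$
S_m(n) = \sum_{j=n+1}^{n+m} 2^{j-n-1} e_j + 2^m e_{n+m},
$$
splitting off the top index $j=n+m+1$ (which contributes $2^{m} e_{n+m+1}$) and combining it with the new tail term $2^{m+1} e_{n+m+1}$ gives the telescoping identity
$$
S_{m+1}(n) - S_m(n) = 2^m e_{n+m+1} + 2^{m+1} e_{n+m+1} - 2^m e_{n+m} = 2^m \bigl( 3 e_{n+m+1} - e_{n+m} \bigr).
$$
Applying the hypothesis at index $n+m$, namely $e_{n+m} \geq 3 e_{n+m+1}$, forces the bracket to be nonpositive, so $S_{m+1}(n) \leq S_m(n)$. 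Hence $S_m(n)$ is nonincreasing in $m$, and together with $S_1(n) = 3 e_{n+1} \leq e_n$ this yields $S_m(n) \leq e_n$ for every $m \geq 1$, which is exactly the claim.

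I do not anticipate any genuine obstacle here, since the argument is a short finite computation rather than a limiting or analytic one. The only point requiring care is the exponent bookkeeping in $2^{j-n-1}$ when the new top index $j=n+m+1$ is separated out: it produces the coefficient $2^m$, which merges with the incoming tail coefficient $2^{m+1}$ to form exactly $3\cdot 2^m$, the factor $3$ that the hypothesis $e_n \geq 3 e_{n+1}$ is designed to dominate. A direct (non-inductive) proof is also available by simply summing the telescoping increments $S_{k+1}(n)-S_k(n)\leq 0$ from $k=1$ to $m-1$, but the inductive presentation is the cleanest to record.
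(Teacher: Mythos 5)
Your proof is correct and is essentially the paper's argument: the paper also inducts on $m$ (after reducing to $n=1$), and its inductive step is exactly your computation that splitting off the top term $2^m e_{n+m+1}$ and merging it with the new tail $2^{m+1}e_{n+m+1}$ produces $3\cdot 2^m e_{n+m+1}$, which the hypothesis $e_{n+m}\geq 3e_{n+m+1}$ dominates. Your packaging of this as monotonicity of $S_m(n)$ in $m$ is only a cosmetic reorganization, so there is nothing substantive to add.
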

\begin{proof}
It is enough to show this lemma for $ n=1.$ We proceed by the induction argument with respect to $m.$ If $m=1,$ then
$$
e_1 \geq 3e_2 = e_2 + 2e_2, 
$$
which shows our claim in this case.
Now assume that 
$$
e_1\geq \sum_{j=2}^{1+m} 2^{j-2} e_j + 2^{m}e_{1+m}.
$$
Observe that 
$$
e_1\geq \sum_{j=2}^{1+m} 2^{j-2} e_j + 2^{m}e_{1+m} \geq \sum_{j=2}^{1+m} 2^{j-2} e_j + 3(2^{m}e_{2+m})
$$
$$
= \sum_{j=2}^{1+m} 2^{j-2} e_j + 2^{m}e_{2+m} + 2^{m+1}e_{2+m} 
= \sum_{j=2}^{1+(m+1)} 2^{j-2} e_j + 2^{m+1}e_{1+(m+1)}, 
$$
as required. The proof is complete.
\end{proof}
Now let $X$ be an infinite dimensional Fr\'echet space equipped with an F-norm $\| \cdot \|.$ Assume that $\mathcal{V} = \{ V_n\}$ is a nested sequence of linear subspaces of $X$ satisfying 
$\overline{V_n} \subseteq V_{n+1},$ where the closure is taken with respect to $ \| \cdot \| .$ Let for $ n \in \mathbb{N}$ 
\begin{equation}
 \label{Fn}
F_n = \{ v \in V_{n+1} : dist(v,V_j)= e_j \hbox{ for } j=1,...,n\}. 
\end{equation}
The first step in obtaining our version of Bernstein's Lethargy Theorem is to show that under some assumptions for any $ n \in \mathbb{N},$ $F_n$ is not only nonempty (by the argument of compactness 
it is sufficient to prove Bernstein's Lethargy Theorem in finite-dimensional setting) but it also consists of elements of a special shape. This will be done in the next lemma.
Before presenting it, we introduce some notation. Set for $ x \in X$ 
\begin{equation}
\label{dist} 
\rho_n(x) = dist(x,V_n)
\end{equation}
and  define 
\begin{equation}
\label{dn}
d_{n, \mathcal{V}} = \sup \{ \rho_n(v) : v \in V_{n+1}\}.
\end{equation}
Let $ \{e_n\}$ be a decreasing sequence of positive real numbers satisfying
$$
\sum_{j=n}^{\infty} 2^{j-n} e_j < \min \{d_{n,\mathcal{V}}, e_{n-1}\}
$$ 
for any $ n \in \mathbb{N}$. (We put $e_o = \infty$ and $\sum_{j=1}^{\infty} 2^{j-1} e_j < \infty , $ if $ d_{1, \mathcal{V}} = \infty.)$
Fix a sequence $ (\delta_n)$ of positive numbers such that for any $ n \in \mathbb{N}$
\begin{equation}
\label{Cauchy}
\sum_{j=n}^{\infty} 2^{j-n} (e_j + \delta_j) < \min \{e_{n-1}, d_{n, \mathcal{V}}\}
\end{equation}
and $ v_n \in V_{n+1} \setminus V_n$ such that 
\begin{equation}
\label{vn}
\sum_{j=n}^{\infty} 2^{j-n} (e_j  + \delta_j)= \rho_n(v_n)\geq \|v_n\| - \delta_n.
\end{equation}
Since the function $ t \rightarrow \rho_n(tx)$ is continuous for any $ x\in X,$ $ \rho_n(x) = \rho_n(x-v)$ for any $ x \in X$ and $v\in V_n,$ such $v_n$ exist by \ref{Cauchy} and  the definition of infimum. 
Then we can state:
\begin{lem}
\label{crucial}
Let $ (X, \| \cdot \|)$ be an infinite-dimensional Fr\'{e}chet space. Let $ \{V_n\} =\mathcal{V}$ be a nested sequence of linear subspaces of $X$ satisfying 
$\overline{V_n} \subseteq V_{n+1},$ where the closure is taken with respect to $ \| \cdot \| .$  Let $ \{e_n\}$ be a decreasing sequence of positive real numbers satisfying \ref{Cauchy} with a fixed sequence of positive numbers $ \{ \delta_n\}.$ 
Then for any $ n \in \mathbb{N}$ there exists a finite set $ Z_n \subset V_n$ and $ w_n \in F_n,$ such that
$$
w_n = \sum_{j=1}^n q_{j,n},
$$
where $ q_{j,n} = t_{j,n}v_j- z_{j,n},$ $ t_{j,n} \in [0,1],$ $ z_{j,n} \in Z_j$ for $j=1,...,n$ and $ v_j$ are given by \ref{vn}.  Moreover,
$$
\| q_{j,n}\| < \sum_{l=j}^{\infty}2^{l-j}(e_l+\delta_l).
$$
\end{lem}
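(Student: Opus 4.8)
The plan is to fix $n$ and construct $w_n = \sum_{j=1}^n q_{j,n}$ by a finite \emph{downward} induction on the index, first fixing $q_{n,n}$, then $q_{n-1,n}$, and so on down to $q_{1,n}$. The guiding observation is that since $v_j \in V_{j+1}$ by \ref{vn} and $z_{j,n} \in V_j$, every summand $q_{j,n} = t_{j,n} v_j - z_{j,n}$ lies in $V_{j+1}$; hence $w_n \in V_{n+1}$ automatically, and for each level $l$ the quantity $\mathrm{dist}(w_n, V_l)$ depends only on the summands $q_{l,n}, \dots, q_{n,n}$, since all lower-index terms lie in $V_{j+1} \subseteq V_l$ and are invisible to $\rho_l$. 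This is exactly what makes the downward order work: once $q_{l,n}$ is chosen to pin $\rho_l(w_n) = e_l$, the later choices $q_{j,n}$ with $j<l$ cannot disturb it, so all $n$ equalities defining $F_n$ in \ref{Fn} hold simultaneously at the end.

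Before the induction I would manufacture the finite sets $Z_j$ once and for all, independently of $n$. For each $j$ the set $K_j = \{t v_j : t \in [0,1]\}$ is compact, being the continuous image of $[0,1]$, so Lemma \ref{compact} applied with $G = K_j$, $V = V_j$ and $\delta = \delta_j$ yields a finite $Z_j \subset V_j$ such that every $t v_j$ admits some $z \in Z_j$ with $\|t v_j - z\| \le \mathrm{dist}(t v_j, V_j) + \delta_j$. Since $K_j$ does not depend on $n$, the same $Z_j$ serves every $w_n$ with $n \ge j$, which is precisely what the statement requires.

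For the inductive step, write $S_{l+1} = \sum_{m=l+1}^n q_{m,n}$ for the part already built and set $\phi_l(t) = \mathrm{dist}(t v_l + S_{l+1}, V_l)$. Since $\rho_l$ is $1$-Lipschitz and $t \mapsto t v_l$ is continuous, $\phi_l$ is continuous on $[0,1]$; I would choose $t_{l,n}$ with $\phi_l(t_{l,n}) = e_l$ by the intermediate value theorem and then select $z_{l,n} \in Z_l$ from the approximation above, giving $q_{l,n} = t_{l,n} v_l - z_{l,n}$. Writing $c_j = \sum_{l=j}^\infty 2^{l-j}(e_l + \delta_l)$, the estimate $\|q_{m,n}\| \le \rho_m(t_{m,n} v_m) + \delta_m \le e_m + \delta_m + \|S_{m+1}\|$ (the middle step using $|\rho_m(t_{m,n}v_m) - \phi_m(t_{m,n})| \le \|S_{m+1}\|$) yields the recurrence $\|S_m\| \le (e_m + \delta_m) + 2\|S_{m+1}\|$, which telescopes, as $\|S_{n+1}\| = 0$, to $\|S_m\| \le \sum_{i=m}^n 2^{i-m}(e_i + \delta_i) < c_m$. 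Feeding this back gives $\|q_{j,n}\| < e_j + \delta_j + c_{j+1} < c_j$, the asserted bound.

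The crux, and the only place the hypothesis \ref{Cauchy} is really spent, is verifying the two endpoint inequalities justifying the intermediate value step, namely $\phi_l(0) \le e_l \le \phi_l(1)$. From $\phi_l(0) \le \|S_{l+1}\|$ and $\phi_l(1) \ge \rho_l(v_l) - \|S_{l+1}\| = c_l - \|S_{l+1}\|$, it suffices to show $\|S_{l+1}\| \le \min\{e_l,\, c_l - e_l\}$. The telescoped bound gives $\|S_{l+1}\| < c_{l+1}$, while \ref{Cauchy} read at index $l+1$ is exactly $c_{l+1} < e_l$, and $c_l - e_l = \delta_l + 2c_{l+1} > c_{l+1} > \|S_{l+1}\|$, so both inequalities follow. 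I expect this bookkeeping — keeping the telescoping estimate, the Lipschitz comparison of $\phi_l$ with $\rho_l$, and the two consequences of \ref{Cauchy} aligned — to be the main technical obstacle, whereas the compactness input and the continuity of $\phi_l$ are routine.
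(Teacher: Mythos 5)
Your proposal is correct and follows essentially the same route as the paper: the finite sets $Z_j$ come from Lemma \ref{compact} applied to $\{tv_j : t\in[0,1]\}$, the coefficients $t_{j,n}$ are produced by a downward induction using the intermediate value theorem on $t \mapsto \rho_l(tv_l + S_{l+1})$, and the norm bound telescopes through the same geometric recursion $\|S_m\| \le (e_m+\delta_m) + 2\|S_{m+1}\|$. The only cosmetic difference is that the paper controls $\rho_l(t_{l,n}v_l)$ by introducing an auxiliary smallest parameter $s_l$ hitting a prescribed target value, whereas you use the $1$-Lipschitz comparison $|\rho_l(tv_l) - \phi_l(t)| \le \|S_{l+1}\|$; both yield the identical estimate.
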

\begin{proof}
Notice that for any $ n \in \mathbb{N}$ the set $ G_n = \{ tv_n: t \in [0,1] \} $ is compact. By Lemma \ref{compact}, applied to $ G_n$ and $V_n,$ we can find a finite set $ Z_n\subset V_n$ such that for any 
$g \in G_n$
\begin{equation}
\label{shift}
\| g- z_n\| < \rho_n(g) + \delta_n
\end{equation}
for some $ z_n \in Z_n.$
Fix  $ n \in \mathbb{N}.$ For $ k \in \{ 0,...,n-1\}.$ Let 
$$
E_{k,n} = \{ v \in V_{n+1} : \rho_j(v)= e_j \hbox{ for } j=n,...,n-k\}.
$$
Now we show that there exists $ q_{n,n},...q_{1,n}, $ such that 
$ q_{j,n} = t_{j,n}v_j- z_{j,n},$ where $ t_{j,n} \in [0,1]$ and $ z_{j,n} \in Z_j$ for $j=1,...,n$ with
\begin{equation}
\label{estimate} 
\sum_{l=j}^n \| q_{l,n}\| < \sum_{l=j}^n 2^{l-j} (e_l+\delta_l)
\end{equation}
for  $j=1,...,n$
satisfying
\begin{equation}
\label{nonempty}
w_{k,n} = \sum_{j=n-k}^n q_{j,n} \in E_{k,n}
\end{equation}
for any $k \in \{ 0,...,n-1\}.$
First we construct $ q_{n,n}.$ Notice that the function $ f_n(t) = \rho_n(tv_n) $ is continuous and by our assumptions on $ v_n,$ 
$$
f_n(1) = \sum_{j=n}^{\infty} 2^{j-n} (e_j+\delta_j) > e_n \hbox{ and }  f_n(0) =0.
$$
By the Intermediate Value Theorem there exists $ t_{n,n} \in [0,1]$ such that 
\begin{equation} 
\label{firststep}
\rho_n(t_{n,n}v_n) = e_n.
\end{equation} 
Again by the continuity of $f_n$ we can assume that $ t_{n,n}$ is the smallest number satisfying \ref{firststep}. By \ref{shift} we can choose $z_n \in Z_n$ such that 
$ \|t_{n,n}v_n - z_n\| < \rho_n(t_{n,n}v_n) + \delta_n.$ Let 
\begin{equation}
\label{qnn} 
q_{n,n}= t_{n,n}v_n - z_n.
\end{equation}
Observe that $ \rho_n(q_{n,n}) =e_n,$ since $ z_n \in V_n$ and $V_n$ is a linear subspace. Hence 
$$
w_{o,n} = q_{n,n} \in E_{o,n}.
$$  
Also $ q_{n,n}$ satisfies \ref{estimate}. Now fix $ k \in \{0,...,n-2\}.$
Assume that we have constructed $ q_{n,n},...,q_{n-k,n}$ satisfying \ref{nonempty} and \ref{estimate}. Now we construct $ q_{n-k-1,n}.$ By \ref{Cauchy} applied to $ n-k-1$ , we can choose $s_{n-k-1} \in [0,1]$ (the smallest one)
such that 
$$
\rho_{n-k-1}(s_{n-k-1}v_{n-k-1}) = e_{n-k-1}+ \sum_{l=n-k}^n 2^{l-n+k}(e_l + \delta_l).
$$
Let $ u_k = \sum_{l=n-k}^n q_{l,n}.$
Observe that
$$
e_{n-k-1}+ \sum_{l=n-k}^n 2^{l-n+k}(e_l + \delta_l)=\rho_{n-k-1}(s_{n-k-1}v_{n-k-1}) 
$$
$$
\leq \rho_{n-k-1}(s_{n-k-1}v_{n-k-1}+u_k) + \rho_{n-k-1}(u_k) 
$$
$$
\leq \rho_{n-k-1}(s_{n-k-1}v_{n-k-1}+u_k) + \|u_k\| 
$$
$$
\leq \rho_{n-k-1}(s_{n-k-1}v_{n-k-1}+u_k) + \sum_{l=n-k}^n 2^{l-n+k}(e_l + \delta_l).
$$
Hence
$$
\rho_{n-k-1}(s_{n-k-1}v_{n-k-1}+u_k) \geq e_{n-k-1}. 
$$
Let $ f_k(t) = \rho_{n-k-1}(tv_{n-k-1}+u_k).$ Observe that
$$ 
f_k(s_{n-k-1}) = \rho_{n-k-1}(s_{n-k-1}v_{n-k-1}+u_k) \geq e_{n-k-1}.
$$ 
Also by our assumptions
$$
f_k(0) = \rho_{n-k-1}(u_k) \leq \|u_k\| \leq \sum_{j=n-k}^n \| q_{j,n}\| 
$$
$$
< \sum_{l=n-k}^n 2^{l-n+k} (e_l+\delta_l) < \sum_{l=n-k}^{\infty} 2^{l-n+k} (e_l+\delta_l) < e_{n-k-1}.
$$
Since the function $f_k$ is continuous,  by the Intermediate Value Theorem  there exists $t_{n-k-1} \in [0,s_{n-k-1}]$ such that 
$$
\rho_{n-k-1}(t_{n-k-1}v_{n-k-1}+u_k) = e_{n-k-1}.
$$
By \ref{shift} we can choose $z_{n-k-1} \in Z_{n-k-1}$ such that 
$$
 \|t_{n-k-1}v_{n-k-1} - z_{n-k-1}\| < \rho_{n-k-1}(t_{n-k-1}v_{n-k-1}) + \delta_{n-k-1}.
$$  
Let 
\begin{equation}
\label{q(n-k-1,n)} 
q_{n-k-1,n}= t_{n-k-1}v_{n-k-1} - z_{n-k-1}.
\end{equation}
Since $ q_{n-k-1,n} \in V_{n-k}$ and $ z_{n-k-1} \in Z_{n-k-1} \subset V_{n-k-1},$ by the above construction
$$
w_{k-1,n} = \sum_{j=n-k-1}^n q_{j,n} \in E_{k+1,n},
$$
which shows that the equation \ref{nonempty} is satisfied.
Moreover by the choice of $ s_{n-k-1}, $ 
$$
\| q_{n-k-1,n}\| = \|t_{n-k-1,n}v_{n-k-1} - z_{n-k-1}\| 
$$
$$
< \rho_{n-k-1}(t_{n-k-1}v_{n-k-1}) + \delta_{n-k-1} \leq \rho_{n-k-1}(s_{n-k-1}v_{n-k-1}) + \delta_{n-k-1} 
$$
$$
= \delta_{n-k-1} + e_{n-k-1}+ \sum_{l=n-k}^n 2^{l-n+k}(e_l + \delta_l).
$$
Observe that 
$$
\sum_{l=n-k-1}^n \| q_{l,n}\| = \| q_{n-k-1,n}\| + \sum_{l=n-k}^n \| q_{l,n}\|
$$
$$ 
\leq \delta_{n-k-1} + e_{n-k-1}+ \sum_{l=n-k}^n 2^{l-n+k}(e_l + \delta_l) + \sum_{l=n-k}^n 2^{l-n+k} (e_l+\delta_l)
$$
$$
=\sum_{l=n-k-1}^n 2^{l-n+k+1}(e_l + \delta_l),
$$
which shows that \ref{estimate} is satisfied. Hence we have constructed $q_{1,n},...,q_{n,n}$ satisfying the requirements of our lemma. In particular, the above reasoning shows that $ E_{n-1,n}$ is nonempty and
$w_{n-1,n} = \sum_{j=1}^n q_{j,n} \in E_{n-1,n}.$
Since $F_n= E_{n-1,n},$ the proof is complete.
\end{proof}
 Applying Lemma \ref{crucial} we now show a version of Bernstein's Lethargy Theorem in Fr\'echet spaces for rapidly decreasing sequences $\{ e_n\}.$ 
\begin{thm}
\label{rapidlydecr1}
Let $ X, $ $ \mathcal{V},$ $ \{d_{n, \mathcal{V}}\}$ and $\{e_n\}$ be such as in Lemma \ref{crucial}. Then, there exists $ x \in X$ such that $ dist(x,V_n)= e_n$ for any $n \in \mathbb{N}.$ 
\end{thm}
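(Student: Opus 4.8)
The plan is to produce the desired $x$ as the limit of a suitable subsequence of the elements $w_n\in F_n$ furnished by Lemma~\ref{crucial}. Recall that $w_n=\sum_{j=1}^n q_{j,n}$ with $q_{j,n}=t_{j,n}v_j-z_{j,n}$, where $t_{j,n}\in[0,1]$, $z_{j,n}\in Z_j$, and $\|q_{j,n}\|<c_j:=\sum_{l=j}^{\infty}2^{l-j}(e_l+\delta_l)$. Since each $w_n\in F_n$, we have $\rho_j(w_n)=e_j$ for all $j\le n$; the whole point is to pass to a limit while preserving these equalities.

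First I would fix $j$ and observe that, as $n$ ranges over $n\ge j$, the scalars $t_{j,n}$ lie in the compact interval $[0,1]$ while the vectors $z_{j,n}$ range over the \emph{finite} set $Z_j$. A diagonal argument over $j=1,2,\dots$ then produces a single strictly increasing sequence $(n_k)$ such that, for every fixed $j$, one has $t_{j,n_k}\to t_j\in[0,1]$ and $z_{j,n_k}$ is eventually equal to a fixed $z_j\in Z_j$; setting $q_j=t_jv_j-z_j$ gives $q_{j,n_k}\to q_j$ coordinatewise and $\|q_j\|\le c_j$. Next I would verify that $\sum_j c_j<\infty$: condition \ref{Cauchy} yields $c_n<e_{n-1}$ for $n\ge 2$, while $\sum_l e_l\le\sum_l 2^{l-1}(e_l+\delta_l)=c_1<\infty$, so $\sum_{n\ge 2}c_n<\sum_l e_l<\infty$ (this is where Lemma~\ref{series} enters). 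By completeness of $X$, the absolutely convergent series $x:=\sum_{j=1}^{\infty}q_j$ then defines an element of $X$.

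The heart of the argument is to upgrade the coordinatewise convergence $q_{j,n_k}\to q_j$ to genuine norm convergence $w_{n_k}\to x$. I would estimate $\|w_{n_k}-x\|\le\sum_{j=1}^{n_k}\|q_{j,n_k}-q_j\|+\sum_{j>n_k}\|q_j\|$ and split the first sum at a fixed index $M$. The terms with $j>M$ are dominated by $\sum_{j>M}2c_j$ uniformly in $k$, and the last tail is at most $\sum_{j>M}c_j$; both are small once $M$ is large, by the summability of $(c_j)$. The remaining finite head $\sum_{j\le M}\|q_{j,n_k}-q_j\|$ tends to $0$ as $k\to\infty$ by coordinatewise convergence, and a standard three-$\varepsilon$ estimate gives $w_{n_k}\to x$. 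Finally, since $|\rho_j(a)-\rho_j(b)|\le\|a-b\|$ follows immediately from the triangle inequality for the F-norm, and since $\rho_j(w_{n_k})=e_j$ whenever $n_k\ge j$, letting $k\to\infty$ yields $\rho_j(x)=e_j$ for every $j\in\mathbb{N}$, as required.

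I expect the genuine obstacle to be exactly this passage from coordinatewise to norm convergence: coordinatewise limits alone do not control $\rho_j(x)$, so one must dominate the tails $\sum_{j>M}\|q_{j,n_k}\|$ uniformly in $k$. Providing this uniform control is precisely the purpose of the summability $\sum_j c_j<\infty$, which is a consequence of the rapid-decrease hypothesis \ref{Cauchy} together with Lemma~\ref{series}; by comparison, the diagonal extraction and the Lipschitz continuity of $\rho_j$ are routine.
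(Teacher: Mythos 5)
Your proposal is correct and follows essentially the same route as the paper's proof: extract a diagonal subsequence using the finiteness of $Z_j$ and compactness of $[0,1]$, define $x=\sum_j q_j$ via the summability of the bounds $c_j$ coming from condition \ref{Cauchy}, upgrade to norm convergence $w_{n_k}\to x$ by the same head/tail split, and conclude with the $1$-Lipschitz continuity of $\rho_j$. One small remark: Lemma \ref{series} is not actually needed here (your own derivation of $\sum_j c_j<\infty$ directly from \ref{Cauchy} is what the paper uses too; Lemma \ref{series} only enters later, in verifying the hypotheses for Theorem \ref{rapidlydecr2}).
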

\begin{proof}
Let for any $n \in \mathbb{N}$ 
$$
F_n = \{ v \in V_{n+1} : \rho_j(v)= e_j \hbox{ for } j=1,...,n\}.
$$
Take $ w_n = \sum_{j=1}^n q_{j,n} \in F_n$ constructed in Lemma \ref{crucial}. Fix $j \in \mathbb{N}.$ Observe that for any $n \in \mathbb{N}$ 
$ q_{j,n} = t_{j,n}v_j - z_{j,n}, $ where $ t_{j,n} \in [0,1]$ and $ z_{j,n} \in Z_j$ (see \ref{shift}). Since $Z_j$ is a finite set, we can select a subsequence $\{n_k\}$  and $ q_j \in V_{j+1}$ such that 
$$
\|q_{j,n_k} - q_j \| \rightarrow 0.
$$
Applying a diagonal argument we can choose a subsequence $ \{n_k\}$ such that for any $ j \in \mathbb{N},$
$$
\|q_{j,n_k} - q_j \| \rightarrow 0.
$$
Let $ s_k = \sum_{j=1}^k q_j.$ We show that $ \{s_k\}$ is a Cauchy sequence. Fix $ \epsilon > 0. $ By \ref{Cauchy} we can find $ l \in \mathbb{N} $ such that 
$$
\sum_{j=l}^{\infty} 2^{j-l}e_j < \epsilon.
$$
Notice that by \ref{Cauchy} and Lemma \ref{crucial} for $ n >k \geq l+1,$ 
$$
\| s_k -s_n\| = \| \sum_{j=k+1}^n q_j \| \leq \sum_{j=k+1}^n \| q_j \| 
$$
$$
\leq \sum_{j=k+1}^n (\sum_{m=j}^{\infty} 2^{m-j}(e_m+ \delta_m)) \leq \sum_{j=k}^{n-1} e_j \leq \epsilon, 
$$
which affirms our claim. Since $X$ is complete, $ \| s_k - x \| \rightarrow 0, $ where 
$$
x = \sum_{j=1}^{\infty} q_j. 
$$
Now we show that $ \| w_{n_k} - x \| \rightarrow 0.$  To do that, fix $ \epsilon > 0$ and $ n_o \in \mathbb{N} $ such that 
$$
\sum_{j=n_o}^{\infty} 2^{j-n_o}e_j < \epsilon.
$$
Note that for \ref{Cauchy} and Lemma \ref{crucial}, $ n_k \geq n_o +1,$ and
$$
\| w_{n_k} - x\| \leq \| \sum_{j=1}^{n_o} q_{j,n_k}- q_j\| + \| \sum_{j=n_o+1}^{n_k} q_{j,n_k}- \sum_{j=n_o+1}^{\infty} q_j\|
$$
$$
\leq \sum_{j=1}^{n_o} \|q_{j,n_k}- q_j\| + \sum_{j=n_o+1}^{\infty} \|q_{j,n_k}\| + \sum_{j=n_o+1}^{\infty} \| q_j\|
$$
$$
\leq \sum_{j=1}^{n_o} \|q_{j,n_k}- q_j\| + 2(\sum_{j=n_o+1}^{\infty}(\sum_{l=j}^{\infty}2^{l-j}(e_l+\delta_l))) \leq \| \sum_{j=1}^{n_o} q_{j,n_k}- q_j\| + 2\sum_{j=n_o}^{\infty} e_j 
$$
$$
\leq \sum_{j=1}^{n_o} \|q_{j,n_k}- q_j\| + 2 \epsilon.
$$
Since $\|q_{j,n_k} - q_j \| \rightarrow_k 0$ for any $ j \in \mathbb{N},$ 
$$
\sum_{j=1}^{n_o} \|q_{j,n_k}- q_j\| + 2 \epsilon \leq 3\epsilon
$$
for k sufficiently large, which shows that $ \| w_{n_k} - x \| \rightarrow 0.$
\newline
Now we show that $ \rho_j(x) = e_j$ for $ j \in \mathbb{N}.$ Fix $ j_o \in \mathbb{N}.$ Then for $ k \geq k_o $ $n_k \geq j_o.$ Since $ w_{n_k} \in F_{n_k}, $ 
$$
\rho_{j_o}(w_{n_k}) = e_{j_o} \hbox{ for } k \geq k_o.
$$
Hence 
$$
\rho_{j_o}(x)= \lim_k \rho_{j_o}(w_{n_k}) = e_{j_o},
$$
which completes the proof.
\end{proof}
\begin{thm}
\label{rapidlydecr2}
Let $ X, $ $ \mathcal{V},$ and $ \{d_{n,\mathcal{V}} \}$ be such as in Lemma \ref{crucial}. Assume that $ \{ e_n \} $ is a decreasing sequence of nonegative numbers satisfying 
$e_n < d_{n, \mathcal{V}}$ and $e_n \geq 3 e_{n+1}$ for any $ n \in \mathbb{N}.$
Then there exists $ x \in X$ such that $ dist(x,V_n)= e_n$ for any $n \in \mathbb{N}.$ 
\end{thm}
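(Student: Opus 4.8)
The plan is to derive Theorem~\ref{rapidlydecr2} from Theorem~\ref{rapidlydecr1} by showing that the two hypotheses $e_n\ge 3e_{n+1}$ and $e_n<d_{n,\mathcal V}$ put $\{e_n\}$ (after, if necessary, a harmless perturbation) into the setting of Lemma~\ref{crucial}. The reason Lemma~\ref{three} is isolated immediately beforehand is that the geometric ratio condition $e_n\ge 3e_{n+1}$ is exactly what turns the opaque summability requirement~\ref{Cauchy} into a checkable statement: it bounds each weighted tail $\sum_{j\ge n}2^{j-n}e_j$ by the single preceding term $e_{n-1}$.

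First I would extract from $e_n\ge 3e_{n+1}$ the two facts the construction consumes. Shifting the index to $n-1$ in Lemma~\ref{three} and letting $m\to\infty$, the slack term satisfies $2^{m}e_{n-1+m}\le (2/3)^{m}e_{n-1}\to 0$, so $\sum_{j=n}^{\infty}2^{j-n}e_j\le e_{n-1}$; the same computation caps this tail at essentially $3e_n$. For every \emph{finite} $m$, however, the positive term $2^{m}e_{n+m}$ in Lemma~\ref{three} makes the bound strict on the partial sums, which are the only ones entering the induction of Lemma~\ref{crucial}; this is precisely the inequality used there to force $f_k(0)=\rho_{n-k-1}(u_k)<e_{n-k-1}$, so the Intermediate Value Theorem step goes through with $\delta_j=0$, or with an arbitrarily small $\{\delta_j\}$ absorbed into the strict slack. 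Thus the ``$e_{n-1}$-half'' of~\ref{Cauchy} is automatic, with room to spare. Granting then the full hypotheses of Lemma~\ref{crucial}, the conclusion is immediate: each $F_n$ carries an element $w_n=\sum_{j=1}^n q_{j,n}$ of the stated shape, the bounds $\|q_{j,n}\|<\sum_{l\ge j}2^{l-j}(e_l+\delta_l)$ furnish (via Lemma~\ref{series} with $\epsilon=1$) a convergent majorant, and the diagonal extraction of Theorem~\ref{rapidlydecr1} yields $x=\sum_j q_j$ with $\rho_{j_0}(x)=\lim_k\rho_{j_0}(w_{n_k})=e_{j_0}$ for every $j_0$.

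The crux, and the step I expect to be the main obstacle, is the remaining ``$d_{n,\mathcal V}$-half'' of~\ref{Cauchy}: the existence of $v_n\in V_{n+1}$ with $\rho_n(v_n)$ large enough to launch the induction. The construction scales $v_n$ by $t\in[0,1]$ and corrects by the already-built tail $u_k$; since $\|u_k\|$ may be comparable to $e_n$ itself, solving $\rho_n(tv_n+u_k)=e_n$ through the triangle inequality forces $v_n$ to reach a distance of order $2e_n$, and Lemma~\ref{crucial} in fact asks for $\rho_n(v_n)=\sum_{j\ge n}2^{j-n}(e_j+\delta_j)$, which the ratio bound makes essentially $3e_n$. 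So what the construction really consumes is $\sum_{j\ge n}2^{j-n}e_j<d_{n,\mathcal V}$, not merely $e_n<d_{n,\mathcal V}$. When $d_{n,\mathcal V}=\infty$ — in particular in every Banach space, recovering Borodin's theorem — this is vacuous and the reduction to Theorem~\ref{rapidlydecr1} is clean; for genuinely finite $d_{n,\mathcal V}$ it is the one point that must be handled, by either reading the standing hypothesis as delivering the tail bound $\sum_{j\ge n}2^{j-n}e_j<d_{n,\mathcal V}$ (which, together with $\sum_{j\ge n}2^{j-n}e_j\le e_{n-1}$ from Lemma~\ref{three}, is exactly~\ref{Cauchy}) or choosing $v_n$ more economically, exploiting that only the scaled values $\rho_n(tv_n)\le\sum_{l\ge n}2^{l-n}(e_l+\delta_l)$ are ever used.

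Finally, to secure a strictly positive $\{\delta_j\}$ I would run the auxiliary perturbation $e_n\mapsto\beta^{\,n}e_n$ with $\beta\uparrow 1$: the perturbed sequence has ratio strictly above $3$, so the computation behind Lemma~\ref{three} gives genuine strict room in the $e_{n-1}$-estimate, Theorem~\ref{rapidlydecr1} applies to each perturbed sequence, and $x$ for the original $\{e_n\}$ is then recovered by letting $\beta\to 1$. The delicate point along this route is that the perturbed solutions themselves need not form a Cauchy sequence in $X$, so the limit must again be extracted at the level of the building blocks $q_{j,n}$, exactly as in the proof of Theorem~\ref{rapidlydecr1}, together with the continuity of each map $\rho_{j_0}$, rather than from the approximating elements directly.
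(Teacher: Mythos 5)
Your overall route is exactly the paper's: Theorem~\ref{rapidlydecr2} is proved there by a one--line reduction to Theorem~\ref{rapidlydecr1}, citing Lemma~\ref{series} and Lemma~\ref{three} to verify condition~\ref{Cauchy}. Your analysis of why Lemma~\ref{three} delivers the $e_{n-1}$--half of~\ref{Cauchy} (tail bound $\sum_{j\ge n}2^{j-n}e_j\le e_{n-1}$, with the leftover term $2^m e_{n+m}$ supplying strictness on partial sums) matches what the paper implicitly relies on.

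The two difficulties you isolate are, however, genuinely present in that reduction and are \emph{not} addressed by the paper. First, Lemma~\ref{three} only bounds the weighted tail by $3e_n$, so the hypothesis $e_n<d_{n,\mathcal V}$ does not by itself give $\sum_{j\ge n}2^{j-n}(e_j+\delta_j)<d_{n,\mathcal V}$; you are right that what the construction consumes is the tail bound. (In the one place the theorem is invoked, Theorem~\ref{main}, this is harmless because $d_{\mathcal V}>0$ is fixed and $f_k\to 0$, so one may enlarge $k_o$; but as a free-standing statement the hypothesis should read $3e_n<d_{n,\mathcal V}$ or the tail inequality itself.) Second, equality $\sum_{j\ge n}2^{j-n}e_j=e_{n-1}$ does occur for exactly geometric sequences $e_n=3^{-n}$, so there is then no room for strictly positive $\delta_j$ and condition~\ref{Cauchy} literally fails; some perturbation is indeed needed. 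Where your proposal falls short is the patch itself: the substitution $e_n\mapsto\beta^n e_n$ with $\beta\uparrow 1$ does not repair the $d_{n,\mathcal V}$--half (the perturbed tail is still of order $3e_n$), and the limit $\beta\to 1$ cannot be extracted ``at the level of the $q_{j,n}$'' by the argument of Theorem~\ref{rapidlydecr1}, because for different $\beta$ the building blocks are formed from \emph{different} vectors $v_j$ and \emph{different} finite sets $Z_j$ (both are chosen in~\ref{vn} and~\ref{shift} in terms of the sequence being approximated), so the compactness that drives the diagonal extraction is no longer available. A cleaner repair is to note that replacing $e_n$ by $\beta^n e_n$ is unnecessary if one instead strengthens the hypothesis to $e_n\ge(3+\epsilon)e_{n+1}$ and $\sum_{j\ge n}2^{j-n}e_j<d_{n,\mathcal V}$, under which Lemma~\ref{series} gives a strictly summable majorant and~\ref{Cauchy} holds with genuine slack; this strengthened form still suffices for Theorem~\ref{main} after an obvious modification of the sequence $\{f_k\}$. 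So: same skeleton as the paper, legitimate objections to the one-line verification, but the proposed fixes are not yet watertight.
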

\begin{proof}
To prove our result, 
it is sufficient to verify if the assumptions of Theorem \ref{rapidlydecr1} are satisfied. But this is a consequence of Lemma \ref{series} and Lemma \ref{three}. 
\end{proof}
Now we show that Theorem \ref{rapidlydecr2} can be applied to prove a version of Bernstein's Lethargy Theorem in the case of any decreasing sequence $ \{e_n\}$ of positive numbers tending to 0 and any nested sequence $\{V_n\}$  
of linear subspaces of a Fr\'echet space $X,$ which is the main result of this paper.
To do that, being inspired by \cite{Kon}, we need the following construction of a decreasing sequence $\{ f_k\}$ of positive numbers tending to zero and a subsequence $ \{n_k\}.$ Put $ f_1 = e_1$ and $ n_1 =1.$ If $ e_1 \geq 3 e_2,$ then we define $ n_2 =2 $ and $ f_2 = e_2.$
If $ e_1 < 3e_2$ then $ f_2 =  \displaystyle \frac{f_1}{3}$ and 
$$
n_2 = \max\{ n \geq 2: f_1 \leq 3e_n\}.
$$
Since $ f_1=e_1 >0$ and $ e_n \rightarrow 0, $ $n_2$ is well-defined.
Now assume that we have constructed positive numbers $ f_1,...,f_k$ and $ n_1,...,n_k\in \mathbb{N}.$ We will construct $ f_{k+1}$ and $ n_{k+1}.$ If 
$$
e_{n_k+1} \leq \frac{f_k}{3}
$$
then we define $ f_{k+1} = e_{n_k+1} $ and $n_{k+1} = n_{k}+1.$ In the opposite case $ f_{k+1} = \displaystyle \frac{f_k}{3}$ and 
$$
n_{k+1} = \max\{ n \geq n_{k}+1: f_k \leq 3e_n\}.
$$ 
Since $ f_k >0$ and $ e_n \rightarrow 0, $ $n_{k+1}$ is well-defined.
\begin{exm}
\label{illustrative}
In the above discussion, if we take $ e_n = \displaystyle \frac{1}{n},$ then $f_k =\displaystyle  \frac{1}{3^{k-1}}$ and $n_k = 3^{k-1}.$ 
\end{exm}
\begin{lem}
\label{Konyagin}
Let $ \{ e_n\}$ be a decreasing sequence of positive numbers tending to $0.$ Let $ \{ f_k \} $ and $ \{ n_k\}$ be as in the above construction. Then for any $ k \in \mathbb{N},$ 
$f_k \geq 3f_{k+1}$ and $e_{n_k +1} \leq f_k.$
\end{lem}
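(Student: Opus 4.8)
The plan is to verify the two claimed inequalities directly from the recursive construction, handling separately the two cases that arise at each step. Recall that at stage $k+1$ we set either (i) $f_{k+1}=e_{n_k+1}$ and $n_{k+1}=n_k+1$ when $e_{n_k+1}\le f_k/3$, or (ii) $f_{k+1}=f_k/3$ and $n_{k+1}=\max\{n\ge n_k+1: f_k\le 3e_n\}$ otherwise. I would prove the inequality $f_k\ge 3f_{k+1}$ first, since it is the more immediate of the two.

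\textbf{Proof of $f_k\ge 3f_{k+1}$.} Fix $k\in\mathbb{N}$ and split into the two cases of the construction. In case (i) we have $f_{k+1}=e_{n_k+1}\le f_k/3$ by the very hypothesis defining this case, so $f_k\ge 3f_{k+1}$ holds immediately. In case (ii) we have $f_{k+1}=f_k/3$ by definition, whence $f_k=3f_{k+1}$. In either case the desired inequality follows. The same argument applied at the initial step (comparing $f_1$ with $f_2$, where either $f_2=e_2\le e_1/3=f_1/3$ or $f_2=f_1/3$) covers the base of the sequence.

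\textbf{Proof of $e_{n_k+1}\le f_k$.} Here I would again examine the two cases, but now reading off how $n_k$ and $f_k$ were jointly defined at stage $k$. If $f_k$ and $n_k$ were produced by case (i) at the previous step, then $f_k=e_{n_{k-1}+1}$ and $n_k=n_{k-1}+1$, so $e_{n_k+1}=e_{n_{k-1}+2}\le e_{n_{k-1}+1}=f_k$ by monotonicity of $\{e_n\}$; I must check that the index indeed increased so that $n_k+1$ is a legitimate index. If instead case (ii) produced them, then $n_k=\max\{n\ge n_{k-1}+1: f_{k-1}\le 3e_n\}$, and by maximality the next index $n_k+1$ fails this property, i.e. $f_{k-1}>3e_{n_k+1}$; combined with $f_k=f_{k-1}/3$ this gives exactly $e_{n_k+1}<f_k$. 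The definition of $n_{k+1}$ via the maximum is well-defined precisely because $e_n\to 0$ while $f_k>0$, as noted in the construction.

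The only subtlety — and hence the step I would watch most carefully — is the bookkeeping of which case defined the current pair $(f_k,n_k)$ versus which case now defines $(f_{k+1},n_{k+1})$, together with the $k=1$ boundary behavior. The inequality $e_{n_k+1}\le f_k$ is really a statement about the \emph{output} of stage $k$ and is used as an input hypothesis to justify the well-definedness and the case split at stage $k+1$; because of this mild circularity it is cleanest to phrase the whole argument as a single induction on $k$, establishing both $f_k\ge 3f_{k+1}$ and $e_{n_k+1}\le f_k$ simultaneously, with the monotonicity of $\{e_n\}$ and the maximality defining $n_k$ as the two essential facts invoked at each step.
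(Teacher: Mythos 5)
Your proof is correct and follows essentially the same approach as the paper's: both claims are verified by a direct case analysis on the two branches of the recursive construction, using the defining inequality $e_{n_k+1}\le f_k/3$ in case (i) and the maximality of $n_k$ together with the monotonicity of $\{e_n\}$ in case (ii). The only cosmetic difference is that for $e_{n_k+1}\le f_k$ you split according to which case produced $(f_k,n_k)$ at stage $k$ (two cases), while the paper first splits on whether $f_{k+1}=e_{n_k+1}$ and then sub-splits (three cases); the underlying facts invoked are identical.
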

\begin{proof}
Fix $ k \in \mathbb{N}.$  If $f_{k+1} = e_{n_k+1}, $ then by our construction 
$$
f_{k+1} = e_{n_k+1} \leq \frac{f_k}{3}.
$$
In the opposite case $f_{k+1} = \displaystyle \frac{f_k}{3},$ which validates our claim.
\newline
Analogously, if $f_{k+1} = e_{n_k+1},  $ then 
$$
e_{n_k+1} \leq \frac{f_k}{3} < f_k.
$$
If $f_{k+1} \neq e_{n_k+1} $ and $ f_{k} \neq e_{n_{k-1}+1},$ then by definition of $n_{k}, $ $ 3e_{n_k+1} \leq f_{k-1} = 3f_k.$ Finally, if $f_{k+1} \neq e_{n_k+1} $ and $f_{k} = e_{n_{k-1}+1} ,$  then 
$$
e_{n_k+1} \leq e_{n_{k-1}+1} = f_k,
$$
since the sequence $ \{ e_n\}$ is decreasing.
\end{proof}
Now we are ready to state the main result of this paper.
\begin{thm}
\label{main}
Let $X$ be a infinite-dimensional Fr\'echet space and let $\mathcal{V}=\{V_n\}$ be a nested sequence of subspaces of $ X$ such that $ \overline{V_n} \subseteq V_{n+1}$ for any $ n \in \mathbb{N}$ 
and 
$$
X=\overline{\displaystyle \bigcup_{n=1}^{\infty}V_n}.
$$ 
Let $ e_n$ be a decreasing sequence of positive numbers tending to 0. Assume that 
\begin{equation}
 \label{nondegenerate}
d_{\mathcal{V}} = \inf\{ d_{n,\mathcal{V}}: n \in \mathbb{N} \} > 0.
\end{equation}
Then there exists $ n_o \in \mathbb{N}$ and $ x \in X$ such that for any $ n \geq n_o$
$$
\frac{e_n}{3} \leq dist(x,V_n) \leq 3 e_n.
$$ 
\end{thm}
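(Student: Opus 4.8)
The plan is to reduce everything to the rapidly decreasing case already settled in Theorem \ref{rapidlydecr2}, using the auxiliary sequences $\{f_k\}$ and $\{n_k\}$ produced by the construction preceding Lemma \ref{Konyagin}. First I would pass to the sparsified family of subspaces $W_k = V_{n_k}$ and set $\mathcal{W}=\{W_k\}$. Since the $n_k$ are strictly increasing and $\overline{V_n}\subseteq V_{n+1}$, one has $\overline{W_k}=\overline{V_{n_k}}\subseteq V_{n_k+1}\subseteq V_{n_{k+1}}=W_{k+1}$, so $\mathcal{W}$ is again an admissible nested family. Moreover, because $V_{n_k+1}\subseteq V_{n_{k+1}}$,
$$
d_{k,\mathcal{W}}=\sup\{dist(v,V_{n_k}):v\in V_{n_{k+1}}\}\ge\sup\{dist(v,V_{n_k}):v\in V_{n_k+1}\}=d_{n_k,\mathcal{V}}\ge d_{\mathcal{V}}>0,
$$
the last inequality being the standing hypothesis \ref{nondegenerate}. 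By Lemma \ref{Konyagin} the sequence $\{f_k\}$ is positive, decreasing, and satisfies $f_k\ge 3f_{k+1}$, so $f_k\le f_1/3^{k-1}\to 0$.

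Since $f_k\to 0$ while $d_{k,\mathcal{W}}\ge d_{\mathcal{V}}>0$, I can fix $k_0$ with $f_{k_0}<d_{\mathcal{V}}$; then $f_k\le f_{k_0}<d_{\mathcal{V}}\le d_{k,\mathcal{W}}$ for every $k\ge k_0$. Reindexing the family $\{W_k\}_{k\ge k_0}$ and the sequence $\{f_k\}_{k\ge k_0}$ to start at $1$, all hypotheses of Theorem \ref{rapidlydecr2} are met (decreasing nonnegative terms, $f_k<d_{k,\mathcal{W}}$, and $f_k\ge 3f_{k+1}$), so Theorem \ref{rapidlydecr2} yields an element $x\in X$ with
$$
\rho_{n_k}(x)=dist(x,V_{n_k})=f_k\qquad\text{for all }k\ge k_0.
$$

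It remains to upgrade these exact values on the sparse set $\{n_k\}$ to a two-sided estimate at every index. Set $n_o=n_{k_0}+1$; the blocks $\{n_k+1,\dots,n_{k+1}\}$ with $k\ge k_0$ partition $\{n:n\ge n_o\}$, so fix $n\ge n_o$ and the unique $k\ge k_0$ with $n_k+1\le n\le n_{k+1}$. As $V_n\subseteq V_{n+1}$, the map $n\mapsto\rho_n(x)$ is nonincreasing, whence
$$
f_{k+1}=\rho_{n_{k+1}}(x)\le\rho_n(x)\le\rho_{n_k}(x)=f_k.
$$
If the construction set $n_{k+1}=n_k+1$ and $f_{k+1}=e_{n_k+1}$, the block is the single point $n=n_{k+1}$ and $\rho_n(x)=f_{k+1}=e_n$, so the estimate is immediate. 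Otherwise $f_{k+1}=f_k/3$ and $n_{k+1}=\max\{n\ge n_k+1:f_k\le 3e_n\}$; then monotonicity of $\{e_n\}$ gives $f_k\le 3e_n$ throughout the block, hence $\rho_n(x)\le f_k\le 3e_n$, while $e_n\le e_{n_k+1}\le f_k=3f_{k+1}$ (the middle inequality being $e_{n_k+1}\le f_k$ from Lemma \ref{Konyagin}) yields $\rho_n(x)\ge f_{k+1}\ge e_n/3$. In either case $e_n/3\le\rho_n(x)\le 3e_n$, as required.

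The hard part is this last step: converting equality at the thin set of indices $n_k$ into a uniform factor-$3$ sandwich at every intermediate $n$ is exactly where the two-case definition of $(f_{k+1},n_{k+1})$ and the role of the constant $3$ must be exploited, and one has to keep careful track of which branch of the construction governs each block. By contrast the earlier steps — checking that $\mathcal{W}$ is admissible, that $d_{k,\mathcal{W}}\ge d_{\mathcal{V}}$, and invoking Theorem \ref{rapidlydecr2} after the shift by $k_0$ — are routine once the reindexing is in place.
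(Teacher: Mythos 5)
Your proposal is correct and follows essentially the same route as the paper's own proof: sparsify to $W_k=V_{n_k}$, check $d_{k,\mathcal{W}}\ge d_{\mathcal{V}}>0$, apply Theorem \ref{rapidlydecr2} to $\{f_k\}_{k\ge k_0}$, and then split each block $n_k<n\le n_{k+1}$ into the two cases of the construction using Lemma \ref{Konyagin}. The only cosmetic difference is that you make the monotonicity sandwich $f_{k+1}\le\rho_n(x)\le f_k$ explicit before the case analysis, which the paper leaves implicit.
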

\begin{proof}
Let $ \{f_k\} $ and $\{ n_k\} $ be two sequences associated with $ \{ e_n\}$ by our construction. Set for $ k \in \mathbb{N}$, $ W_k = V_{n_k}$, and $ \mathcal{V}_1 = \{ W_k\}.$ Since for any $ k \in \mathbb{N}, $
$ V_{n_k +1} \subset W_{k+1},$ 
$$
d_{k, \mathcal{V}_1} \geq d_{n_k, \mathcal{V}} \geq d_{\mathcal{V}} >0.
$$
 By Lemma \ref{Konyagin}  $ f_k \geq 3f_{k+1}$ for any $k \in \mathbb{N}.$ Fix $ k_o \in \mathbb{N}$ such that for $ k \geq k_o$
$$
f_k < d_{\mathcal{V}} \leq d_{k, \mathcal{V}_1}.
$$ 
Applying Theorem \ref{rapidlydecr2} to $\{ f_k : k \geq k_o\}$ and $ \{ W_k : k \geq k_o\}, $  there exists $x \in X$ such that 
$$
dist(x, V_{n_k}) = f_k \hbox{ for } k \geq k_o.
$$ 
Let $ n_o = n_{k_o}+1.$ Fix $ n \geq n_o.$ Then there exists exactly one $ k \in \mathbb{N}$ such that $ n_k < n \leq n_{k+1}.$ If $ f_{k+1}=e_{n_k+1}, $ 
then by our construction $n_{k+1} = n_k +1$ and 
$$
\frac{e_n}{3} < e_n= e_{n_k+1}= f_{k+1} = dist(x,V_n) < 3e_n. 
$$
If $f_{k+1} \neq e_{n_k+1} ,$ then by Lemma \ref{Konyagin} and our assumptions
$$
dist(x,V_n) \geq dist(x,V_{n_{k+1}}) = f_{k+1} = \frac{f_k}{3} \geq \frac{e_{n_k+1}}{3} \geq \frac{e_n}{3}.
$$
Also if $f_{k+1} \neq e_{n_k+1},$ then, by definition of $ n_{k+1},$ 
$$
dist(x,V_n) \leq dist(x,V_{n_k}) = f_k \leq 3 e_n.
$$
The proof is complete.
\end{proof}
\begin{rem}
If $d_{\mathcal{V}}=0$ the above result holds true with the same proof for sequences $ \{ e_n\} $ satisfying $ e_n < d_{n,\mathcal{V}}$  for $ n \geq n_o.$
\end{rem}
From Theorem \ref{main} we can easily obtain a version of Shapiro's  theorem, \cite{Sha}, and a version of Tyuremskikh's  theorem, \cite{Tyu}, for Fr\'echet spaces.
\begin{thm}
Let the assumptions of Theorem \ref{main} be satisfied. Then there exists $ x \in X$ such that $\rho_n(x) \neq O(e_n).$ 
\end{thm}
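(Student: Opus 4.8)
The plan is to deduce this from Theorem \ref{main} by applying that theorem not to $\{e_n\}$ itself but to an auxiliary sequence that tends to zero strictly more slowly. Note first that one cannot feed $\{e_n\}$ directly into Theorem \ref{main}: that would produce an $x$ with $\rho_n(x) \le 3e_n$, i.e. $\rho_n(x) = O(e_n)$, which is the exact opposite of the desired conclusion. So the key is to manufacture a companion sequence $\{\tilde e_n\}$ that is still admissible for Theorem \ref{main} (decreasing, positive, tending to $0$) yet dominates $\{e_n\}$ in the sense that $\tilde e_n / e_n \to \infty$.

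First I would construct $\{\tilde e_n\}$ by an elementary block argument. Since $\{e_n\}$ is decreasing to $0$, for each $k \in \mathbb{N}$ I can choose an index $N_k$ with $e_{N_k} \le 4^{-k}$, arranged so that $N_1 < N_2 < \cdots$. Define $\tilde e_n = 2^{-k}$ for $N_k \le n < N_{k+1}$, and $\tilde e_n = 1$ for $n < N_1$. Then $\{\tilde e_n\}$ is a decreasing sequence of positive numbers tending to $0$. Moreover, for $n$ in the block $[N_k, N_{k+1})$ one has $e_n \le e_{N_k} \le 4^{-k}$ while $\tilde e_n = 2^{-k}$, so
$$
\frac{\tilde e_n}{e_n} \ge \frac{2^{-k}}{4^{-k}} = 2^k,
$$
and since $k \to \infty$ as $n \to \infty$ we conclude $\tilde e_n / e_n \to \infty$.

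Next I would apply Theorem \ref{main} to the sequence $\{\tilde e_n\}$; the hypothesis $d_{\mathcal V} > 0$ is a condition on $\mathcal V$ alone and is untouched by this substitution. This yields an index $n_o$ and a point $x \in X$ such that
$$
\frac{\tilde e_n}{3} \le \rho_n(x) \le 3\tilde e_n \qquad \text{for all } n \ge n_o.
$$
Dividing the left-hand inequality by $e_n$ gives $\rho_n(x)/e_n \ge \tilde e_n/(3e_n) \to \infty$, so no constant $M$ can satisfy $\rho_n(x) \le M e_n$ for all large $n$; that is, $\rho_n(x) \ne O(e_n)$, as claimed.

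I expect essentially no serious obstacle here, since the statement is a direct corollary of Theorem \ref{main} (which is why the paper calls it ``easily obtained''). The only point requiring any care is the construction of $\{\tilde e_n\}$: one must ensure simultaneously that it is monotone decreasing, tends to $0$, and has $\tilde e_n/e_n$ unbounded, which the block-constant construction above arranges. If strict monotonicity of $\{\tilde e_n\}$ is preferred, one may perturb $\tilde e_n$ slightly downward within each block without affecting either the limit $\tilde e_n \to 0$ or the blow-up $\tilde e_n/e_n \to \infty$.
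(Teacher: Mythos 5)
Your proof is correct and takes essentially the same route as the paper: apply Theorem \ref{main} to an auxiliary decreasing null sequence $\tilde e_n$ with $\tilde e_n/e_n \to \infty$, so that the lower bound $\rho_n(x) \ge \tilde e_n/3$ forces $\rho_n(x)/e_n \to \infty$. The paper simply takes $\tilde e_n = \sqrt{e_n}$ in place of your block-constant construction, which avoids the (minor) monotonicity bookkeeping.
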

\label{Sha}
\begin{proof}
By Theorem \ref{main} applied to the sequence $ \{ \sqrt{e_n}\}$ there exists $ x\in X$ and $ n_o \in \mathbb{N}$ such that
$$
\frac{\sqrt{e_n}}{3} \leq \rho_n(x) \leq 3 \sqrt{e_n}
$$ 
for $n \geq n_o.$
Since $ e_n \rightarrow 0,$ it is obvious that $ \rho_n(x) \neq O(e_n).$
\end{proof}
\begin{thm}
\label{Tyu}
Let the assumptions of Theorem \ref{main} be satisfied. Then there exists $ x \in X$ and $ n_o \in \mathbb{N}$ such that $\rho_n(x) \geq e_n$ for $ n\geq n_o.$ 
\end{thm}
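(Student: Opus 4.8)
The plan is to reduce the statement to a single application of Theorem \ref{main}, exploiting the freedom to rescale the target sequence. Rather than feeding $\{e_n\}$ itself into Theorem \ref{main}, I would apply that theorem to the rescaled sequence $\{3e_n\}$. Multiplication by the positive constant $3$ preserves positivity, monotonicity, and convergence to $0$, so $\{3e_n\}$ satisfies exactly the same hypotheses as $\{e_n\}$; the space $X$, the nested family $\mathcal{V}$, and the nondegeneracy condition \ref{nondegenerate} are untouched, so all assumptions of Theorem \ref{main} remain in force.

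Theorem \ref{main}, applied to $\{3e_n\}$, then yields an element $x \in X$ and an index $n_o \in \mathbb{N}$ such that $\frac{3e_n}{3} \leq \rho_n(x) \leq 3\cdot 3e_n$ for every $n \geq n_o$, that is, $e_n \leq \rho_n(x) \leq 9e_n$. The left-hand inequality is precisely the desired conclusion $\rho_n(x) \geq e_n$ for $n \geq n_o$, and the (unneeded) upper bound comes along for free. No further construction is required once $x$ and $n_o$ are obtained this way.

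There is essentially no obstacle here: all the real work has already been carried out inside Theorem \ref{main}, and the only point to verify is that scaling by $3$ keeps us within the admissible class of decreasing positive null sequences, which is immediate. This parallels the proof of the preceding Shapiro-type result, where the substitution $\{\sqrt{e_n}\}$ was used; here the cruder substitution $\{3e_n\}$ suffices because we only seek a comparison up to a multiplicative constant rather than an asymptotic non-domination statement.
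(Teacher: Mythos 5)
Your proof is correct and follows essentially the same route as the paper: a one-step application of Theorem \ref{main} to a rescaled sequence. The paper uses $\{3\sqrt{e_n}\}$ (and then notes $\sqrt{e_n}\geq e_n$ eventually since $e_n\to 0$), whereas your choice $\{3e_n\}$ is slightly more direct and avoids that extra observation.
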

\begin{proof}
By Theorem \ref{main} applied to the sequence $ \{ 3\sqrt{e_n}\}$ there exists $ x\in X$ and $ n_o \in \mathbb{N}$ such that
$$
\sqrt{e_n} \leq \rho_n(x) \leq 9 \sqrt{e_n}.
$$ 
for $ n\geq n_o.$
Since $ e_n \rightarrow 0,$ it is obvious that $ \rho_n(x) \geq \sqrt{e_n} \geq e_n$ for $ n\geq n_o.$ 
\end{proof}
Note that the parameter $d_{\mathcal{V}}$ defined by \ref{nondegenerate} can be equal to 0 which is illustrated in the next example. 
\begin{exm}
Let $X$ be a space of complex sequences with the Fr\'{e}chet norm of $x\in X$  given by $\|x\| = \displaystyle \sum_{i=0}^{\infty} 2^{-i} \displaystyle \frac{|x_i|}{1+|x_i|}. $  Set for $n\in \mathbb{N}$,  $ V_n =\{ x\in X: \,\,\, x_j=0\,\, \mbox{for}\,\, j\geq n \}. $
It is easy to see that 
$$
d_{n,\mathcal{V}} = \sup \{ \rho_n(v) : v \in X\} = \frac{1}{2^{n}}
$$
and consequently that  $d_{\mathcal{V}} = 0.$ 
\end{exm}
However, if a Fr\'echet space $ (X, \| \cdot \|)$ is equipped with an $s$-convex $F$-norm for some $s \in (0,1],$  which means 
\begin{equation}
\label{sconvex}
\| tx\| = |t|^s \| x\| \hbox{ for any } x \in X,
\end{equation}
then obviously for any linear subspace $V$ of $X$ and $ x \in X, $ $ dist(tx, V) = |t|^s dist (x,V).$ 
In particular this equality holds true in Banach spaces with $ s=1.$
Hence we can state the following theorem. 
\begin{thm}
\label{main1}
Let $X$ be a Fr\'echet space with an $s$-convex $F$-norm and let $\{V_n\}$ be a nested sequence of subspaces of $ X$ such that $ \overline{V_n} \subseteq V_{n+1}$ for any $ n \in \mathbb{N}.$ Let $ e_n$ be a decreasing sequence of positive numbers tending to 0.
Then there exists $ x \in X$ such that for any $ n \in \mathbb{N}$
$$
\frac{e_n}{3} \leq dist(x,V_n) \leq 3 e_n.
$$ 
\end{thm}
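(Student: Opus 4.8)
The plan is to specialize Theorem~\ref{main}, exploiting the fact that $s$-convexity erases every hypothesis that separates Theorem~\ref{main1} from it. The central observation is that, for an $s$-convex $F$-norm, the nondegeneracy assumption \ref{nondegenerate} is not merely satisfied but improves to $d_{\mathcal V}=\infty$. Indeed, since the embeddings are strict, for each $n$ we may pick $v\in V_{n+1}\setminus\overline{V_n}$, so that $\rho_n(v)>0$; the scaling identity $\rho_n(tv)=dist(tv,V_n)=|t|^s\,dist(v,V_n)=|t|^s\rho_n(v)$ recorded just before the statement then forces $d_{n,\mathcal V}=\sup\{\rho_n(w):w\in V_{n+1}\}=\infty$ upon letting $t\to\infty$. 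Consequently $d_{\mathcal V}=\inf_n d_{n,\mathcal V}=\infty>0$.

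Next I would run the Konyagin-type construction preceding Theorem~\ref{main} verbatim to produce $\{f_k\}$ and $\{n_k\}$ from $\{e_n\}$, and set $W_k=V_{n_k}$, $\mathcal V_1=\{W_k\}$. By Lemma~\ref{Konyagin} we have $f_k\ge 3f_{k+1}$, and, exactly as in the proof of Theorem~\ref{main}, $d_{k,\mathcal V_1}\ge d_{n_k,\mathcal V}=\infty$, so in particular $f_k<d_{k,\mathcal V_1}$ for \emph{every} $k\ge1$. I would then apply Theorem~\ref{rapidlydecr2} to the sequence $\{f_k\}_{k\ge1}$ and the nested family $\{W_k\}_{k\ge1}$, obtaining $x\in X$ with $dist(x,V_{n_k})=f_k$ for all $k\ge 1$. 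It is worth stressing here that Theorem~\ref{rapidlydecr2}, and the underlying Lemma~\ref{crucial}, never invoke the density condition $X=\overline{\bigcup_n V_n}$: the element $x$ is built intrinsically as a norm-convergent series $\sum_j q_j$ with $q_j\in V_{j+1}$. This is precisely why that hypothesis may be dropped in Theorem~\ref{main1}.

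The payoff of $d_{\mathcal V}=\infty$ is that the threshold index $k_o$ in the proof of Theorem~\ref{main} may be taken to be $k_o=1$, since the requirement there was only $f_k<d_{\mathcal V}$; this collapses the exceptional index $n_o$. For $n=1=n_1$ one reads off $dist(x,V_1)=f_1=e_1$, so $\tfrac{e_1}{3}\le dist(x,V_1)\le 3e_1$ holds trivially. For $n\ge 2$ there is a unique $k\ge1$ with $n_k<n\le n_{k+1}$, and the two-sided estimate $\tfrac{e_n}{3}\le dist(x,V_n)\le 3e_n$ follows from monotonicity of $n\mapsto dist(x,V_n)$ together with Lemma~\ref{Konyagin}, exactly as in the final display of the proof of Theorem~\ref{main}. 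Hence the inequalities hold for \emph{all} $n\in\mathbb N$, as required.

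The only genuinely new point, and the step I would verify most carefully, is the identity $d_{n,\mathcal V}=\infty$: this is where $s$-convexity does all the work, upgrading mere positivity of the approximation numbers $\rho_n(v)$ to their unboundedness along the ray $\{tv:t\ge0\}$. Everything afterward is a specialization of already-established results under the simplification $k_o=1$; the remaining care is simply to confirm that the endpoint $n=1$ is covered and that the proof of Theorem~\ref{rapidlydecr2} is indeed independent of the density assumption.
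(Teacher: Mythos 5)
Your proposal is correct and follows exactly the paper's route: the paper's proof of Theorem~\ref{main1} consists of the single observation that $s$-convexity forces $d_{\mathcal V}=\infty$, after which one ``reasons as in Theorem~\ref{main}.'' Your write-up merely fills in the details the paper leaves implicit (the scaling argument for $d_{n,\mathcal V}=\infty$, the collapse of $k_o$ to $1$, the separate check at $n=1$, and the fact that the density hypothesis is never used in Lemma~\ref{crucial} or Theorem~\ref{rapidlydecr2}), all of which are accurate.
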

\begin{proof}
It is easy to see that by \ref{sconvex} $d_{\mathcal{V}} = \infty .$ Hence reasoning as in Theorem \ref{main}  the proof is complete.
\end{proof}
In particular we have:
\begin{thm}
\label{Banach}
Let $X$ be a Banach space and let $\{V_n\}$ be a nested sequence of subspaces of $ X$ such that $ \overline{V_n} \subseteq V_{n+1}$ for any $ n \in \mathbb{N}.$ Let $ e_n$ be a decreasing sequence of positive numbers tending to 0.
Then there exists $ x \in X$ such that for any $ n \in \mathbb{N}$
$$
\frac{e_n}{3} \leq dist(x,V_n) \leq 3 e_n.
$$ 
\end{thm}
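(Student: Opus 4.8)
The plan is to recognize Theorem \ref{Banach} as the special case $s=1$ of Theorem \ref{main1}, so that essentially nothing new needs to be proved beyond checking that a Banach space fits the hypotheses of that theorem. Accordingly, the proof will be immediate once two routine identifications are made.

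First I would verify that every Banach space $(X,\|\cdot\|)$ is a Fr\'echet space in the sense of the definition given above, with the norm itself serving as the $F$-norm. Indeed, the norm axioms give $\|x\|=0$ if and only if $x=0$ and the triangle inequality $\|x+y\|\leq\|x\|+\|y\|$; moreover, for any scalar $\alpha$ with $|\alpha|=1$ we have $\|\alpha x\|=|\alpha|\,\|x\|=\|x\|$, so all the $F$-norm axioms hold. Completeness of $X$ with respect to the induced translation-invariant metric $d(x,y)=\|x-y\|$ is precisely the defining property of a Banach space.

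Next I would observe that the norm is $s$-convex with $s=1$: the homogeneity $\|tx\|=|t|\,\|x\|$ is exactly condition \ref{sconvex} for $s=1$. Consequently, for every linear subspace $V$ of $X$ and every $x\in X$ one has $dist(tx,V)=|t|\,dist(x,V)$, which is the scaling property exploited in the proof of Theorem \ref{main1}. Having checked these two points, I would simply invoke Theorem \ref{main1} with $s=1$, which at once produces an element $x\in X$ with $\frac{e_n}{3}\leq dist(x,V_n)\leq 3e_n$ for all $n\in\mathbb{N}$.

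There is no genuine obstacle here: the entire analytic content has already been carried by Lemma \ref{crucial}, Theorem \ref{rapidlydecr2}, and Theorem \ref{main1}, and the only thing to confirm is that the Banach setting is the $s=1$ instance of the $s$-convex Fr\'echet framework. The one point worth stressing is that, unlike in Theorem \ref{main}, no nondegeneracy hypothesis on $d_{\mathcal{V}}$ is required, since $s$-convexity forces $d_{\mathcal{V}}=\infty$; this is why the two-sided estimate holds for \emph{every} $n\in\mathbb{N}$ rather than only for $n\geq n_o$.
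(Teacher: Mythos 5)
Your proposal is correct and coincides with the paper's own treatment: Theorem \ref{Banach} is presented there as an immediate corollary of Theorem \ref{main1}, obtained exactly by noting that a Banach norm is a $1$-convex $F$-norm, so that $d_{\mathcal{V}}=\infty$ and the conclusion holds for all $n$. Nothing further is needed.
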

In the case of Banach spaces, applying Theorem \ref{thm:Borodin} instead of Theorem \ref{rapidlydecr2} we can improve Theorem \ref{thm:Konyagin}.
We need the following lemma:
\begin{lem}
\label{two}
Let $ \{ e_n\}$ be a sequence of positive numbers such that $ e_n \geq 2e_{n+1}$ for any $ n \in \mathbb{N}.$ The for any $n \in \mathbb{N} \setminus \{0\},$ 
$$
e_n\geq \sum_{j=n+1}^{\infty} e_{j}
$$
\end{lem}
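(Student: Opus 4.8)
The plan is to prove Lemma~\ref{two} by a straightforward induction-free telescoping estimate, exploiting the geometric-decay hypothesis $e_n \geq 2e_{n+1}$. First I would observe that iterating the hypothesis gives $e_{n+j} \leq \frac{1}{2^{j}} e_n$ for every $j \geq 0$: indeed, applying $e_{m} \geq 2 e_{m+1}$ repeatedly yields $e_n \geq 2 e_{n+1} \geq 2^{2} e_{n+2} \geq \cdots \geq 2^{j} e_{n+j}$, and dividing through by $2^{j}$ gives the claimed bound. This single observation does essentially all the work.

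With this in hand, I would bound the tail directly. Writing the index as $j = n + k$ with $k \geq 1$, the sum becomes
$$
\sum_{j=n+1}^{\infty} e_j = \sum_{k=1}^{\infty} e_{n+k} \leq \sum_{k=1}^{\infty} \frac{e_n}{2^{k}} = e_n \sum_{k=1}^{\infty} \frac{1}{2^{k}} = e_n,
$$
where the geometric series $\sum_{k=1}^{\infty} 2^{-k} = 1$ is used in the last step. (Convergence of the series on the left is automatic from the same comparison, so each tail is a well-defined real number, exactly as in the argument of Lemma~\ref{series}.) This chain of inequalities is precisely the desired conclusion $e_n \geq \sum_{j=n+1}^{\infty} e_j$.

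Alternatively, one could mirror the inductive scheme used in Lemma~\ref{three}: establish the finite version $e_n \geq \sum_{j=n+1}^{n+m} e_j + 2^{-m} e_{n+m} \cdot (\text{constant})$ and then let $m \to \infty$. However, since the hypothesis here is cleaner (ratio $2$ rather than $3$, and no trailing remainder term is needed), I expect the direct geometric comparison above to be the shortest route and would present that one.

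I do not anticipate a genuine obstacle in this lemma; it is a routine tail estimate for a geometrically dominated sequence. The only point requiring a word of care is confirming that all the series involved converge so that the manipulations are legitimate, but this follows immediately from the comparison $e_{n+k} \leq 2^{-k} e_n$, which forces absolute convergence. This lemma is evidently the analogue of Lemma~\ref{three} tailored to the hypothesis $e_n > \sum_{k=n+1}^{\infty} e_k$ appearing in Borodin's Theorem~\ref{thm:Borodin}, and its purpose is to verify that a sequence satisfying $e_n \geq 2 e_{n+1}$ meets Borodin's summability condition, thereby enabling the improvement of Konyagin's theorem in the Banach-space setting.
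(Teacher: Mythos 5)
Your proof is correct. The paper argues differently in form though not in substance: it reduces to the case $n=1$ and proves by induction the finite inequality $e_1 \geq \sum_{j=2}^{m} e_j + e_m$ for every $m\geq 2$ (base case $e_1 \geq 2e_2 = e_2 + e_2$, inductive step splitting the trailing term via $e_m \geq 2e_{m+1} = e_{m+1} + e_{m+1}$), exactly parallel to the scheme of Lemma \ref{three}, and then lets $m \to \infty$. You instead iterate the hypothesis to get $e_{n+k} \leq 2^{-k}e_n$ and compare with the geometric series $\sum_{k \geq 1} 2^{-k} = 1$; this is the "alternative" you yourself flagged as the shorter route, and it is. The two arguments encode the same estimate --- the paper's inductive remainder term $e_m$ is precisely your geometric tail $2^{-(m-1)}e_1$ --- but yours avoids the induction at the cost of invoking the closed form of the geometric series, while the paper's version keeps the bookkeeping uniform with Lemma \ref{three}, where the explicit remainder $2^{m}e_{n+m}$ is genuinely needed. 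Your remarks on convergence of the tail and on the lemma's role (verifying Borodin's condition $e_n \geq \sum_{k>n} e_k$ so as to improve Konyagin's constant in the Banach case) match the paper's use of the result in Theorem \ref{Banach1}.
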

\begin{proof}
It is enough to show that for any $ n \in \mathbb{N}, $ $ e_1 \geq \sum_{j=2}^n e_n + e_n.$
If $ n =2, $ then $ e_1 \geq 2e_2 = e_2 + e_2, $ which shows our claim for $ n=2.$ The general case can be easily obtained by an induction argument with respect to $n.$
\end{proof}
Now we slightly modify the construction of a decreasing sequence $\{ f_k\}$ of positive numbers tending to zero and a subsequence $ \{n_k\}$ given in the case of Frech\'et spaces. Put $ f_1 = e_1$ and $ n_1 =1.$ If $ e_1 \geq 2 e_2,$ then we define $ n_2 =2 $ and $ f_2 = e_2.$
If $ e_1 < 2e_2$ then $ f_2 = \frac{f_1}{2}$ and 
$$
n_2 = \max\{ n \geq 2: f_1 \leq 2e_n\}.
$$
Since $ f_1=e_1 >0$ and $ e_n \rightarrow 0, $ $n_2$ is well-defined.
Now assume that we have constructed positive numbers $ f_1,...,f_k$ and $ n_1,...,n_k\in \mathbb{N}.$ We will construct $ f_{k+1}$ and $ n_{k+1}.$ If 
$$
e_{n_k+1} \leq \frac{f_k}{2}
$$
then we define $ f_{k+1} = e_{n_k+1} $ and $n_{k+1} = n_{k}+1.$ In the opposite case $ f_{k+1} = \displaystyle \frac{f_k}{2}$ and 
$$
n_{k+1} = \max\{ n \geq n_{k}+1: f_k \leq 2e_n\}.
$$ 
Since $ f_k >0$ and $ e_n \rightarrow 0, $ $n_{k+1}$ is well-defined. Reasoning as in Lemma \ref{Konyagin} we can prove:
\begin{lem}
\label{Konyagin1}
Let $ \{ e_n\}$ be a decrasing sequence of positive numbers tending to $0.$ Let $ \{ f_k \} $ and $ \{ n_k\}$ be as in the above construction. Then for any $ k \in \mathbb{N},$ 
$f_k \geq 2f_{k+1}$ and $e_{n_k +1} \leq f_k.$
\end{lem}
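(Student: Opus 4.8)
The plan is to mimic, essentially verbatim, the proof of Lemma \ref{Konyagin}, replacing the factor $3$ by $2$ everywhere. The construction of $\{f_k\}$ and $\{n_k\}$ given just before the statement is the exact dyadic analogue of the one used there, with the threshold condition $f_k \le 3 e_n$ replaced by $f_k \le 2 e_n$ and the rescaling $f_{k+1} = f_k/3$ replaced by $f_{k+1} = f_k/2$. Since every branch of the recursion has a direct counterpart, no genuinely new idea is needed; the work is purely to check that the two inequalities survive the substitution.

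First I would fix $k \in \mathbb{N}$ and dispose of the inequality $f_k \geq 2 f_{k+1}$ by examining which branch of the construction produced $f_{k+1}$. If $f_{k+1} = e_{n_k+1}$, then this branch was taken precisely because $e_{n_k+1} \leq f_k/2$, so $f_{k+1} = e_{n_k+1} \leq f_k/2$. In the opposite branch $f_{k+1} = f_k/2$ by definition, so equality holds. In either case $f_k \geq 2 f_{k+1}$, and this half is immediate.

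Next I would establish $e_{n_k+1} \leq f_k$, which is the more delicate half and the place where I expect the main bookkeeping to lie. Again I split according to how $f_{k+1}$ was defined. If $f_{k+1} = e_{n_k+1}$, then directly $e_{n_k+1} = f_{k+1} \leq f_k/2 < f_k$. If instead $f_{k+1} \neq e_{n_k+1}$, I must look one step further back at how $f_k$ and $n_k$ were produced. When $f_k = e_{n_{k-1}+1}$, the construction forces $n_k = n_{k-1}+1$, so that $e_{n_k+1} \leq e_{n_{k-1}+1} = f_k$ because $\{e_n\}$ is decreasing. When $f_k \neq e_{n_{k-1}+1}$, instead $f_k = f_{k-1}/2$ and $n_k = \max\{\, n \geq n_{k-1}+1 : f_{k-1} \leq 2 e_n \,\}$; since $n_k+1 \geq n_{k-1}+1$ fails this defining condition by maximality, we get $f_{k-1} > 2 e_{n_k+1}$, i.e. $e_{n_k+1} < f_{k-1}/2 = f_k$.

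The only real obstacle is verifying that these three subcases for the second inequality are exhaustive and that the maximality clause in the definition of $n_k$ is invoked on the correct index $n_k+1$; the base case $k=1$ (where $n_1 = 1$ and $f_1 = e_1$) is trivially covered, since $e_2 \leq e_1 = f_1$ and monotonicity together with the two branches handle $f_1 \geq 2 f_2$. Beyond this, everything is a transcription of Lemma \ref{Konyagin} with $2$ in place of $3$, so the proof closes without further effort.
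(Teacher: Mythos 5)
Your proposal is correct and is exactly what the paper intends: the paper gives no separate proof, stating only that one reasons as in Lemma \ref{Konyagin}, and your argument is precisely that proof transcribed with $2$ in place of $3$ (same two-branch check for $f_k \geq 2f_{k+1}$, same three subcases for $e_{n_k+1} \leq f_k$ using maximality in the definition of $n_k$). No gaps.
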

Now we can state
\begin{thm}
\label{Banach1}
Let $X$ be a Banach space and let $\{V_n\}$ be a nested sequence of subspaces of $ X$ such that $ \overline{V_n} \subseteq V_{n+1}$ for any $ n \in \mathbb{N}.$ Let $ e_n$ be a decreasing sequence of positive numbers tending to 0.
Then there exists $ x \in X$ such that for any $ n \in \mathbb{N}$
$$
\frac{e_n}{2} \leq dist(x,V_n) \leq 2 e_n.
$$ 
\end{thm}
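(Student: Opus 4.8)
The plan is to imitate the proof of Theorem \ref{main} almost verbatim, with every factor $3$ replaced by $2$ and with Borodin's Theorem \ref{thm:Borodin} taking over the role that Theorem \ref{rapidlydecr2} played there. Starting from the decreasing null sequence $\{e_n\}$ I would run the modified (factor-$2$) construction introduced just before Lemma \ref{Konyagin1}, producing $\{f_k\}$ and $\{n_k\}$, and set $W_k = V_{n_k}$, $\mathcal{V}_1 = \{W_k\}$. Because $X$ is a Banach space its norm is $1$-homogeneous, so for strictly embedded subspaces $d_{n,\mathcal{V}} = \infty$; hence the $W_k$ form a strictly nested system and, unlike in Theorem \ref{main}, no nondegeneracy hypothesis of the form \ref{nondegenerate} is needed and the construction may be started from $k=1$.

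Next I would check the single hypothesis of Borodin's theorem for the sparse data $(\{W_k\},\{f_k\})$. Lemma \ref{Konyagin1} gives $f_k \ge 2 f_{k+1}$ and $e_{n_k+1}\le f_k$ for every $k$; feeding the first of these into Lemma \ref{two} yields the summability estimate $f_k \ge \sum_{j=k+1}^{\infty} f_j$. Applying Theorem \ref{thm:Borodin} to $\{W_k\}$ and $\{f_k\}$ then furnishes $x\in X$ with $dist(x, V_{n_k}) = f_k$ for all $k$.

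It remains to transfer these exact deviations on the sparse system to two-sided estimates on the full system. For $n\ge 2$ choose the unique $k$ with $n_k < n \le n_{k+1}$; since $V_{n_k}\subseteq V_n \subseteq V_{n_{k+1}}$ and the distance is antitone in the subspace, $f_{k+1} \le dist(x,V_n) \le f_k$. I would then split into the two cases of the construction exactly as in Theorem \ref{main}. If $f_{k+1}=e_{n_k+1}$ then $n_{k+1}=n_k+1$, so $n=n_{k+1}$ and $dist(x,V_n)=f_{k+1}=e_n$, which lies in $[e_n/2,\,2e_n]$. If instead $f_{k+1}=f_k/2$, the lower estimate follows from $dist(x,V_n)\ge f_{k+1}=f_k/2\ge e_{n_k+1}/2\ge e_n/2$ (using $e_{n_k+1}\le f_k$ and the monotonicity of $\{e_n\}$), while the upper estimate follows from the maximality defining $n_{k+1}$, namely $f_k \le 2e_n$ for $n\le n_{k+1}$, giving $dist(x,V_n)\le f_k\le 2e_n$. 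The case $n=1$ is immediate from $dist(x,V_1)=f_1=e_1$.

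The one genuinely delicate point, and the place where I expect the real work to sit, is the compatibility of Lemma \ref{two} with the hypothesis of Theorem \ref{thm:Borodin}: Lemma \ref{two} delivers only the non-strict inequality $f_k \ge \sum_{j>k} f_j$, whereas Borodin's statement asks for a strict inequality, and this is no accident, since the extremal sequence $e_n=1/n$ produces $f_k = 2^{1-k}$, for which $\sum_{j>k} f_j = f_k$. I would handle this either by invoking Borodin's theorem in the (non-strict) form that the construction actually supplies, or, if only the strict form is available, by perturbing the target values $f_k$ by a vanishing amount chosen to create strict summability while preserving both sides of the factor-$2$ estimate; the room for such a perturbation comes from the fact that in the borderline geometric regime the upper estimate is tight only at $n=n_{k+1}$ and the lower estimate only near $n=n_k+1$, never simultaneously at the same index. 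Everything else is a mechanical copy of the proof of Theorem \ref{main} with $3$ replaced by $2$.
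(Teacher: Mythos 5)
Your overall architecture is exactly the paper's: run the factor-$2$ construction, pass to the sparse system $W_k=V_{n_k}$, get exact deviations $f_k$ from Borodin's theorem, and transfer back to all $n$ via Lemma \ref{Konyagin1} and the maximality in the definition of $n_{k+1}$; that transfer argument, as you write it, is correct and is verbatim what the paper does. You have also correctly located the one non-mechanical point: Lemma \ref{two} only yields $f_k\ge\sum_{j>k}f_j$, and this is sharp (your $e_n=1/n$ example, or $e_{2m}=e_{2m+1}=2^{-m}$, gives $f_k$ exactly geometric of ratio $1/2$), so Theorem \ref{thm:Borodin} as stated does not apply. But this is precisely where the paper spends essentially all of its effort, and neither of your two proposed resolutions closes the gap as written. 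The first (``invoke Borodin in the non-strict form that the construction actually supplies'') assumes the statement you need; the non-strict version is not available off the shelf and has to be proved. The paper proves it by perturbing the sequence to $e_{n,k}=e_n+\frac{1}{k3^n}$, applying strict Borodin for each $k$, and then --- crucially --- using the explicit shape of Borodin's solution $x_k=\sum_n\lambda_{n,k}q_n$ with $\|q_n\|\le2$ and $|\lambda_{n,k}|\le e_{n,1}$ to run a compactness/diagonal argument on the coefficients and pass to a limit $x$ with $\rho_n(x)=e_n$ exactly. This step peeks inside Borodin's construction; it cannot be done from the statement of Theorem \ref{thm:Borodin} alone.

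Your second fallback --- perturb the targets $f_k$ upward to restore strict summability and absorb the error in the slack of the factor-$2$ bounds --- has a concrete failure. The claim that the upper estimate is tight ``only at $n=n_{k+1}$'' is false when $e_n$ is locally constant: take $e_1=1$, $e_{2m}=e_{2m+1}=2^{-m}$. Then $f_2=1/2$, $n_2=2$, $f_3=1/4$, $n_3=5$, and for the interior index $n=4$ the only available upper bound is $\mathrm{dist}(x,V_4)\le \mathrm{dist}(x,V_{n_2})=f_2+\epsilon_2$, while the required bound is $2e_4=1/2=f_2$; so any $\epsilon_2>0$ destroys the estimate. A downward perturbation symmetrically destroys the lower bound at indices where $e_n=e_{n_k+1}=f_k$. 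Since in this same example $\{f_k\}$ is exactly geometric, you cannot avoid the borderline case either. So the perturbation must be performed at the level of Borodin's theorem (and then undone by a limit), as in the paper, not at the level of the $f_k$; as it stands, your proof establishes the theorem only for sequences $\{e_n\}$ whose associated $\{f_k\}$ satisfies the strict inequality.
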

\begin{proof}
First we show that Theorem \ref{thm:Borodin} remains true under the weaker assumption  
$$ 
e_n \geq \sum_{j=n+1}^{\infty}e_j 
$$ 
for any $n \in \mathbb {N}.$ To prove this modification, for fixed $ k \in \mathbb{N},$ 
and a sequence $ \{ e_n \} $ satisfying the required condition, set $ e_{n,k} = e_n +\displaystyle  \frac{1}{k3^n}.$  It is clear that for any $ n \in \mathbb{N},$ 
$$
e_{n,k} > \sum_{j=n+1}^{\infty}e_{j,k}  \hbox{ for any } n \in \mathbb {N}.
$$
By Theorem \ref{thm:Borodin} there exists $x_k \in X$ such that $ \rho_n{x_k} = e_{n,k}$ for any $ n \in \mathbb{N}.$  Moreover, by the proof of Theorem \ref{thm:Borodin} (see \cite{Br})
$$
x_k = \sum_{n=1}^{\infty} \lambda_{n,k} q_n,
$$ 
where $ q_n \in X, $ $ \| q_n \| \leq 2 $ and $| \lambda_{n,k}| \leq e_{n,k} \leq e_{n,1} $ for any $ n\in \mathbb{N}.$ By the compactness argument and a diagonal process we can assume that there exists a subsequence
$ \{ k_l\} $ such that
$$
\lambda_{n,k_l} \rightarrow_l \lambda_n
$$ 
for any $ n \in \mathbb{N}.$
Notice that for any $ n \in \mathbb{N}$ $ |\lambda_n| \leq e_{n,1}.$ Since the series $ \sum_{n=1}^{\infty}e_{n,1}$ is convergent and $ \|q_n\| \leq 2$ for any $ n \in \mathbb{N}$
the vector 
$$
x = \sum_{n=1}^{\infty} \lambda_n q_n
$$ 
is well-defined and obviousy $ \| x_{k_l} - x\| \rightarrow_l 0.$ Consequently, by the continuity of the function $ \rho_n,$ for any $ n \in \mathbb{N}$
$$
\rho_n(x_{k_l}) = e_{n,k_l} \rightarrow_l e_n = \rho_n(x),
$$
which shows our modification. 
\newline
Now the proof of Theorem \ref{Banach1} proceeds in a similar way as the proof of Theorem \ref{main}. Instead of Theorem \ref{rapidlydecr2}, Lemma \ref{three} and Lemma \ref{Konyagin}, we should apply our modified construction,  
Theorem \ref{thm:Borodin} in our modified version, Lemma \ref{two} and Lemma \ref{Konyagin1}.
\end{proof}
From Theorem \ref{main} we can easily deduce a version of Bernstein's Lethargy Theorem in which subspaces are replaced by sets $ W_n$ satisfying
\begin{equation}
\label{sets}
\overline{span(W_n)} \subseteq W_{n+1}, \hbox{ for } n \in \mathbb{N}.
\end{equation}
\begin{thm}
\label{specialsets}
Let $X$ be a Frech\'et space and let $\{W_n\}$ be a nested sequence of subsets of $ X.$ Set $ V_n = \overline{span(W_n)}$ for $n \in \mathbb{N}.$ Let $\{V_n\} $ satisfy \ref{sets} and the assumptions of Theorem \ref{main}. Suppose that $ e_n$ is a decreasing sequence of positive numbers tending to 0.
Then there exists $ n_o \in \mathbb{N}$ and $ x \in X$ such that for any $ n \geq n_o$
$$
\frac{e_n}{3} \leq dist(x,W_n) \leq 3 e_{n-1}.
$$ 
\end{thm}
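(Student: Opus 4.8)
The plan is to deduce this directly from Theorem \ref{main} applied to the subspaces $\{V_n\}$, exploiting that each set $W_n$ is squeezed between two consecutive subspaces of the sequence $\{V_n\}$. First I would record the key chain of inclusions. Since $W_n \subseteq span(W_n) \subseteq \overline{span(W_n)} = V_n$, we always have $W_n \subseteq V_n$; on the other hand, condition \ref{sets} written for the index $n-1$ reads $V_{n-1} = \overline{span(W_{n-1})} \subseteq W_n$. Hence for every $n \geq 2$ we obtain the double inclusion $V_{n-1} \subseteq W_n \subseteq V_n$. In particular $\{V_n\}$ is a nested sequence of linear subspaces with $\overline{V_n} = V_n \subseteq V_{n+1}$, and the remaining hypotheses of Theorem \ref{main} (namely $X = \overline{\bigcup_n V_n}$ and $d_{\mathcal V} > 0$) are precisely what we are granted by assumption.

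Next, I would translate the inclusions into distance inequalities. Because the distance from a point to a larger set cannot exceed its distance to a smaller one, the containment $V_{n-1} \subseteq W_n \subseteq V_n$ yields, for every $x \in X$ and every $n \geq 2$,
$$
dist(x, V_n) \leq dist(x, W_n) \leq dist(x, V_{n-1}).
$$
Now I apply Theorem \ref{main} to $X$, to $\mathcal V = \{V_n\}$, and to the given sequence $\{e_n\}$: there exist $m_o \in \mathbb N$ and $x \in X$ such that $\frac{e_n}{3} \leq dist(x, V_n) \leq 3 e_n$ for all $n \geq m_o$.

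Finally, combining the two displays, for every $n \geq m_o + 1$ (so that both indices $n$ and $n-1$ exceed $m_o$) I would conclude
$$
dist(x, W_n) \geq dist(x, V_n) \geq \frac{e_n}{3}
$$
and
$$
dist(x, W_n) \leq dist(x, V_{n-1}) \leq 3 e_{n-1},
$$
which is exactly the asserted two-sided estimate once we set $n_o = m_o + 1$. There is essentially no hard analytic step here: the entire content is the reduction, and the only points demanding care are verifying the double inclusion $V_{n-1} \subseteq W_n \subseteq V_n$ from the definition $V_n = \overline{span(W_n)}$ together with \ref{sets}, and checking that $\{V_n\}$ genuinely meets the hypotheses of Theorem \ref{main} so that its conclusion may be invoked. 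The slight index shift from $e_n$ to $e_{n-1}$ appearing in the upper bound is forced precisely because $W_n$ is trapped from below only by $V_{n-1}$ rather than by $V_n$, so the best available upper estimate for $dist(x,W_n)$ is the one coming from $dist(x,V_{n-1}) \leq 3 e_{n-1}$.
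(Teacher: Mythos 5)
Your proposal is correct and follows exactly the paper's argument: derive the sandwich $V_{n-1}\subseteq W_n\subseteq V_n$ from \ref{sets}, convert it to $dist(x,V_n)\le dist(x,W_n)\le dist(x,V_{n-1})$, and invoke Theorem \ref{main}. Your version is in fact slightly more careful than the paper's, since you explicitly handle the index shift by taking $n_o=m_o+1$ so that both $n$ and $n-1$ lie in the range where Theorem \ref{main} applies.
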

\begin{proof}
Notice that by \ref{sets} for any $ x \in X$ and $ n \in \mathbb{N}$
$$
dist(x,V_n) \leq dist(x,W_n) \leq dist(x,V_{n-1}).
$$
By Theorem \ref{main} there exists $x \in X$ and $ n_o \in \mathbb{N}$ such that 
$$
\frac{e_n}{3} \leq dist(x,V_n) \leq 3 e_n.
$$
for $ n \geq n_o,$ which shows our claim. 
\end{proof}
\begin{rem}
If we assume additionally that there exists $M > 0$ such that $\displaystyle \frac{e_n}{e_{n+1}} \leq M$ for any $ n \in \mathbb{N},$ then we obtain a stronger version of Theorem \ref{specialsets}. In this case  there exists $ n_o \in \mathbb{N}$ and $ x \in X$ such that for any $ n \geq n_o$
$$
\frac{e_n}{3} \leq dist(x,W_n) \leq 3M e_{n}.
$$ 
\end{rem}
Notice that the assumption \ref{sets} in Theorem \ref{specialsets} is necessary.
\begin{exm}
Let $X$ be a Banach space and let $ W_n = \{ x \in X: \|x\| \leq n\}.$ Let $ \{e_n\} $ be a decreasing sequence of positive numbers tending to zero. Since $ X =\displaystyle  \bigcup_{n=1}^{\infty} W_n,$ there is no $x\in X$ such that
$$
\frac{e_n}{3} \leq dist(x,W_n) \leq 3 e_{n-1}.
$$ 
\end{exm}
At the end of this section we show some results concerning Fr\'echet spaces  $(X, \| \cdot \|)$ and sequences of subspaces of $X$ $ \mathcal{V}=\{ V_n\}$ satisfying or not satisfying the assumption \ref{nondegenerate}.
First notice that by \cite{Lew}, Prop. 3.4 and Cor. 3.8, if for all $n$, $V_n$ are finite-dimensional, and $X = \overline{\bigcup_{n=1}^{\infty} V_n}$ then $ d_{\mathcal{V}} > 0$ provided 
$ R(\mathcal{V})>0,$ where 
$$
R (\mathcal{V}) = \inf \{ \sup \{ \| tv\| : t \in \mathbb{R}_+ \}, v \in \left(\bigcup_{n=1}^{\infty} V_n\right) \setminus \{0 \} \}.
$$
In general, we have the following:
\begin{lem}
\label{R}
Let $ (X, \| \cdot \|)$ be an infinite-dimensional Fr\'{e}chet space. Let $\mathcal{V} = \{  V_n\} $ be a nested sequence of linear subspaces of $X$ satisfying 
$\overline{V_n} \subseteq V_{n+1}$ such that 
$$
X=\overline{\bigcup_{n=1}^{\infty}V_{n}}.
$$
If $ d_{\mathcal{V}} > 0$ then $ R (\mathcal{V}) >0.$
\end{lem}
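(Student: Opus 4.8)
The plan is to argue by contraposition, showing that if $R(\mathcal{V})=0$ then $d_{\mathcal{V}}=0$. First I would record two elementary facts about the quantity $N(v):=\sup_{t\in\mathbb{R}_+}\|tv\|$, in terms of which $R(\mathcal{V})=\inf\{N(v):v\in(\bigcup_n V_n)\setminus\{0\}\}$. By the F-norm axiom $\|\alpha v\|=\|v\|$ for $|\alpha|=1$, the value $N(v)$ depends only on the line $\mathbb{R}v$, so $R(\mathcal{V})$ is really an infimum over the one-dimensional subspaces contained in $\bigcup_n V_n$. Moreover, since $0\in V_m$ we have $\rho_m(tv)\le\|tv\|$ for every $t$ and every $m$, whence $N(v)\ge\sup_t\rho_m(tv)$ for all $m$. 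This is the one inequality linking $N$ to the distances $\rho_m$ defining $d_{n,\mathcal{V}}$.

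Next, assuming $R(\mathcal{V})=0$, I would choose nonzero vectors $v_k\in\bigcup_n V_n$ with $N(v_k)\to0$ and, for each $k$, let $p_k$ be the least index with $v_k\in V_{p_k}$, so that $v_k\in V_{p_k}\setminus V_{p_k-1}$ is a genuinely new direction at level $p_k-1$. The inequality above applied with $m=p_k-1$ gives $\sup_t\rho_{p_k-1}(tv_k)\le N(v_k)\to0$. The aim is then to upgrade this statement about the single direction $v_k$ to the statement that the layer supremum $d_{p_k-1,\mathcal{V}}=\sup\{\rho_{p_k-1}(v):v\in V_{p_k}\}$ is small; letting $k\to\infty$ this would force $\inf_n d_{n,\mathcal{V}}=d_{\mathcal{V}}=0$ and complete the contraposition.

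The main obstacle is precisely this last upgrade. The quantity $d_{n,\mathcal{V}}$ is a supremum over the entire layer $V_{n+1}$, whereas $N(v_k)$ sees only the single line $\mathbb{R}v_k$. If the steps are codimension one, i.e. $\dim(V_{n+1}/V_n)=1$, then up to scalars $v_k$ is the only new direction and $d_{p_k-1,\mathcal{V}}=\sup_t\rho_{p_k-1}(tv_k)\le N(v_k)\to0$, so the argument closes at once. In general, however, a single layer can simultaneously carry a short new direction (a small-$N$ vector such as $v_k$) and a long one contributing a large value to $\rho_n$, and then one small vector does not make the supremum small. Bridging this is the delicate point: I would either reduce to codimension-one steps, or invoke the finite-dimensional analysis of \cite{Lew}, Prop. 3.4 and Cor. 3.8, to control the layer supremum by the behaviour of $N$ on the (compact) sphere of $V_{n+1}/V_n$. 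I expect this passage from a single direction to the layer supremum to carry the real weight of the proof, while the reduction in the first two paragraphs is routine.
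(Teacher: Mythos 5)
Your first two paragraphs are sound: since $0\in V_m$ one has $\rho_m(tv)\le\|tv\|$, hence $N(v)\ge\sup_t\rho_m(tv)$, and picking nonzero $v_k$ with $N(v_k)\to 0$ and $p_k$ minimal with $v_k\in V_{p_k}$ gives $\sup_t\rho_{p_k-1}(tv_k)\to 0$. The problem is the step you flag yourself in the third paragraph, and it is a genuine gap that neither of your proposed bridges closes. The implication ``some nonzero $v\in V_{n+1}\setminus V_n$ has small $N(v)$, hence $d_{n,\mathcal{V}}$ is small'' fails whenever a single layer carries both a bounded line and a direction staying far from $V_n$: take $X=\ell^1\times\mathbb{R}^{\mathbb{N}}$ with F-norm $\|(y,z)\|=\frac{\|y\|_1}{1+\|y\|_1}+\sum_k 2^{-k}\frac{|z_k|}{1+|z_k|}$ and $V_n$ the span of the first $n$ coordinate vectors of each factor; then the $\mathbb{R}^{\mathbb{N}}$-directions give $R(\mathcal{V})=0$ while the $\ell^1$-directions give $d_{n,\mathcal{V}}\ge\sup_t t/(1+t)=1$ for every $n$. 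Reducing to codimension-one steps is not legitimate (refining the chain changes both $R(\mathcal{V})$ and the family $\{d_{n,\mathcal{V}}\}$ over which the infimum is taken), and Prop.~3.4 and Cor.~3.8 of \cite{Lew} are invoked in the paper only for the reverse implication $R(\mathcal{V})>0\Rightarrow d_{\mathcal{V}}>0$ for finite-dimensional $V_n$; they do not supply the upgrade from one line to the layer supremum. So the proposal is not a proof.

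For comparison, the paper's own argument is exactly the direct form of your first paragraph: it notes $\|tx\|\ge\rho_n(tx)$ and, citing density of $\bigcup_n V_n$, immediately asserts $R(\mathcal{V})\ge d_{\mathcal{V}}$. That is, it jumps over precisely the point at which you stop, and the asserted inequality does not survive the example above. You have therefore correctly located the weight-bearing step; it is not resolved in the source, and the statement should be regarded as established only where the passage from a single new direction to the layer supremum is automatic (e.g.\ one-dimensional quotients $V_{n+1}/V_n$, or $s$-convex F-norms, for which $N\equiv\infty$ and the claim is vacuous).
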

\begin{proof}
Notice that for any $ x \in X,$ $ n \in \mathcal{N}$ and $t \in \mathcal{R}_+, $ $ \|tx\| \geq \rho_n(tx).$ Since $X=\overline{\bigcup_{n=1}^{\infty}V_{n}},$ this implies that
$$
R(\mathcal{V}) \geq d_{\mathcal{V}},
$$
which proves our claim.
\end{proof}
\begin{rem}
\label{RdX}
By \cite{Lew}, Prop. 3.4 and Cor. 3.8 and Lemma \ref{R} if all $n$, $V_n$ are finite-dimensional, then $ d_{\mathcal{V}} > 0$ if and only if $ R (\mathcal{V}) >0.$ We do not know if this 
is satisfied for arbitrary $ \mathcal{V}.$
\end{rem}
\begin{lem}
\label{eq}
Let $ (X, \| \cdot \|)$ be an infinite-dimensional Fr\'{e}chet space. Let $\mathcal{V} = \{  V_n\} $ be a nested sequence of linear subspaces of $X$ satisfying 
$\overline{V_n} \subseteq V_{n+1}$ such that 
$$
X=\overline{\bigcup_{n=1}^{\infty}V_{n}}.
$$
Let $ \| \cdot \|_1$ be an F-norm defined on $X$ equivalent to $ \| \cdot \|.$ Denote 
$$
\rho_{n,1}(x) = \inf \{ \| x - v\|_1 : v \in V_n\},
$$
$$
d_{n,\mathcal{V},1} =\sup\{ \rho_{n,1}(x) : x \in V_{n+1}\}
$$
and
$$
d_{\mathcal{V},1} = \inf \{d_{n,\mathcal{V},1}: n \in \mathcal{N}\}.
$$
Then $d_{\mathcal{V}}=0$ if and only if $d_{\mathcal{V},1} =0.$
\end{lem}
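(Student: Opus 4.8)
The plan is to recast the claimed equivalence $d_{\mathcal{V}}=0 \iff d_{\mathcal{V},1}=0$ in its contrapositive form $d_{\mathcal{V}}>0 \iff d_{\mathcal{V},1}>0$, and to exploit the symmetry of the two sides so that only one implication requires a genuine argument. The single structural fact I would use about equivalent F-norms is that they induce the same topology, so the identity map $\mathrm{id}\colon (X,\|\cdot\|)\to(X,\|\cdot\|_1)$ is a homeomorphism, hence continuous at $0$ in both directions. Because an F-norm is translation invariant, continuity at the single point $0$ already gives a usable quantitative statement: for every $\epsilon>0$ there is $\delta>0$ with $\|y\|<\delta \Rightarrow \|y\|_1<\epsilon$, and symmetrically. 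I stress that, since F-norms need not be homogeneous, I cannot expect a linear comparison $c\|y\|\le\|y\|_1\le C\|y\|$; continuity at $0$ is exactly the weaker device that still suffices, because the thresholds against which distances are compared are fixed numbers.

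Next I would carry out the main implication $d_{\mathcal{V},1}>0 \Rightarrow d_{\mathcal{V}}>0$. Assume $\alpha:=d_{\mathcal{V},1}>0$, so $d_{n,\mathcal{V},1}\ge\alpha$ for every $n$. Applying continuity of $\mathrm{id}\colon(X,\|\cdot\|)\to(X,\|\cdot\|_1)$ with $\epsilon=\alpha/2$ produces a single $\delta>0$, independent of $n$, such that $\|y\|<\delta$ forces $\|y\|_1<\alpha/2$; the contrapositive reads $\|y\|_1\ge\alpha/2 \Rightarrow \|y\|\ge\delta$. For each $n$ I would then select $v_n\in V_{n+1}$ with $\rho_{n,1}(v_n)>\alpha/2$, which is possible since the supremum defining $d_{n,\mathcal{V},1}$ is at least $\alpha$. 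Consequently $\|v_n-u\|_1>\alpha/2$ for every $u\in V_n$, and the contrapositive yields $\|v_n-u\|\ge\delta$ for every such $u$; taking the infimum over $u\in V_n$ gives $\rho_n(v_n)\ge\delta$. Hence $d_{n,\mathcal{V}}\ge\delta$ for all $n$, so $d_{\mathcal{V}}\ge\delta>0$.

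Finally I would observe that the argument is symmetric in the pair $(\|\cdot\|,\|\cdot\|_1)$: repeating it with the roles of the two F-norms interchanged, now using continuity of $\mathrm{id}\colon(X,\|\cdot\|_1)\to(X,\|\cdot\|)$, proves $d_{\mathcal{V}}>0 \Rightarrow d_{\mathcal{V},1}>0$. Combining the two implications gives $d_{\mathcal{V}}>0 \iff d_{\mathcal{V},1}>0$, which is precisely the asserted equivalence $d_{\mathcal{V}}=0 \iff d_{\mathcal{V},1}=0$. Note that neither completeness nor the density hypothesis $X=\overline{\bigcup_n V_n}$ actually enters the comparison; they are inherited standing assumptions, but the relation between the two $d$-parameters rests solely on topological equivalence. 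The one point demanding care, and the main obstacle, is the uniformity of $\delta$: it must be extracted from continuity at $0$ \emph{before} quantifying over $n$, so that the same $\delta$ governs all the differences $v_n-u$ simultaneously. This uniformity is exactly what allows the $\|\cdot\|_1$-separation to survive both the infimum over $u\in V_n$ and the infimum over $n$ when passing to the $\|\cdot\|$-side.
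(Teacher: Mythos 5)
Your proof is correct and rests on the same single fact the paper uses: equivalence of the two F-norms makes the identity map continuous at $0$, which by translation invariance converts smallness of $\|x_n-v_n\|$ into smallness of $\|x_n-v_n\|_1$. The paper runs this as a sequential proof by contradiction (picking $x_n$ with $\rho_{n,1}(x_n)\ge \epsilon/2$ while $\|x_n-v_n\|\to 0$), whereas you prove the contrapositive directly with a uniform $\delta$; the content is identical, and your uniform formulation even avoids the paper's slight imprecision in writing $\lim_n\rho_n(x_n)=0$ where only convergence along a subsequence is guaranteed by $d_{\mathcal{V}}=0$.
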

\begin{proof}
Assume that $ d_{\mathcal{V}} =0$ and $ d_{\mathcal{V},1} > 0.$ Then there exists $ \epsilon > 0 $ such that $ d_{n,\mathcal{V},1} > \epsilon $ for any $ n \in \mathbb{N}.$ This implies that we can find $ x_n \in V_{n+1}$ such that 
$$
\rho_{n,1}(x_n) \geq \epsilon/2.
$$
On the other hand, since $ d_{\mathcal{V}} = 0,$ $ lim_n \rho_n(x_n) = 0 $ and consequently, there exist $ v_n \in V_n$ such that $\lim_n \| x_n - v_n \| =0.$ Since $ \| \cdot \|_1$ is equivalent to 
$ \| \cdot \| $ 
$$
\epsilon/2 \leq \rho_{n,1}(x_n) \leq \| x_n - v_n \|_1 \rightarrow 0;
$$ 
a contradiction.
\end{proof}
As an application of Lemma \ref{eq} and Theorem \ref{main1} we can state the following theorem.
\begin{thm}
\label{lbounded}
Let $(X, \| \cdot \|)$ be a locally bounded Fr\'echet space and let $\{V_n\}$ be a nested sequence of subspaces of $ X$ such that $ \overline{V_n} \subseteq V_{n+1}$ for any $ n \in \mathbb{N}.$ Let $ e_n$ be a decreasing sequence of positive numbers tending to 0.
Then there exists $ x \in X$ and $ n_o \in \mathbb{N}$ such that for any $ n \geq n_o$
$$
\frac{e_n}{3} \leq dist(x,V_n) \leq 3 e_n.
$$ 
\end{thm}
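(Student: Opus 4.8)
The strategy is to reduce Theorem \ref{lbounded} to Theorem \ref{main}. The only hypothesis of Theorem \ref{main} that is not immediate here is the non-degeneracy condition \ref{nondegenerate}, namely $d_{\mathcal{V}} = \inf_n d_{n,\mathcal{V}} > 0$; once this is established, Theorem \ref{main} applied to $(X,\|\cdot\|)$, $\{V_n\}$ and $\{e_n\}$ produces $x$ and $n_o$ with $e_n/3 \le dist(x,V_n)\le 3e_n$ for $n\ge n_o$, which is exactly the assertion. Note that one cannot simply invoke Theorem \ref{main1}: although a locally bounded space does carry an $s$-convex $F$-norm, that norm is in general different from $\|\cdot\|$, and the distances it measures are not the ones appearing in the conclusion. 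Hence the $s$-convex norm will enter only as an auxiliary device whose sole purpose is to certify that $d_{\mathcal{V}}>0$.

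To produce such an auxiliary norm I would appeal to the Aoki--Rolewicz theorem (\cite{Rol}): a locally bounded metric linear space carries an equivalent $s$-convex $F$-norm $\|\cdot\|_1$ for some $s\in(0,1]$, that is, one satisfying \ref{sconvex} and inducing the same topology as $\|\cdot\|$. Since $\|\cdot\|_1$ is equivalent to $\|\cdot\|$, the two norms have the same closed sets, so $\{V_n\}$ still satisfies $\overline{V_n}\subseteq V_{n+1}$ with respect to $\|\cdot\|_1$; and exactly as observed in the proof of Theorem \ref{main1}, the $s$-convexity of $\|\cdot\|_1$ forces $d_{\mathcal{V},1}=\infty$, and in particular $d_{\mathcal{V},1}>0$.

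Next I would invoke Lemma \ref{eq}: since $\|\cdot\|_1$ is an $F$-norm equivalent to $\|\cdot\|$, we have $d_{\mathcal{V}}=0$ if and only if $d_{\mathcal{V},1}=0$. As $d_{\mathcal{V},1}>0$, the contrapositive yields $d_{\mathcal{V}}>0$, which is precisely \ref{nondegenerate}, and an application of Theorem \ref{main} finishes the proof. If $X\neq\overline{\bigcup_n V_n}$, I would first pass to the closed subspace $Y=\overline{\bigcup_n V_n}$, which is complete and still locally bounded, on which the distances to the $V_n$ are unchanged; this secures the density hypothesis implicit in Lemma \ref{eq} and Theorem \ref{main} while leaving the conclusion unaffected, since the resulting $x$ lies in $Y\subseteq X$.

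The main obstacle is conceptual rather than computational: recognizing that Theorem \ref{main1} cannot be applied directly, because it controls the wrong distances, and that the correct mechanism is to use the $s$-convex norm supplied by Aoki--Rolewicz purely as a witness for $d_{\mathcal{V},1}>0$, transporting this positivity back to the original norm through the topological invariance encoded in Lemma \ref{eq}. Once this is seen, the remainder is merely the assembly of results already proved.
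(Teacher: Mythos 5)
Your proposal is correct and follows essentially the same route as the paper: the paper's proof also invokes the Aoki--Rolewicz theorem (\cite{Rol}, Th.\ 3.2.1) to obtain an equivalent $p$-homogeneous norm, and then combines Lemma \ref{eq} with the observation behind Theorem \ref{main1} (that $s$-convexity forces $d_{\mathcal{V},1}=\infty$) to conclude. Your write-up merely makes explicit two points the paper leaves implicit --- that Theorem \ref{main1} cannot be applied directly because it controls distances in the wrong norm, and that one may pass to $\overline{\bigcup_n V_n}$ to secure the density hypothesis --- both of which are accurate refinements rather than deviations.
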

\begin{proof}
By \cite{Rol}, p.95, Th. 3.2.1, there exists $ 0 < p \leq 1$ and a $p$-homogenous norm equivalent to $ \| \cdot \|.$ By Lemma \ref{eq} and Theorem \ref{main1} we get our result.
\end{proof}
In particular applying Theorem \ref{lbounded} and \cite{Rol}, p. 96, Th. 3.2.2 we get
\begin{thm}
\label{lconvex}
Let $(X, \| \cdot \|)$ be a locally convex Fr\'echet space and let $\{V_n\}$ be a nested sequence of subspaces of $ X$ such that $ \overline{V_n} \subseteq V_{n+1}$ for any $ n \in \mathbb{N}.$ Let $ e_n$ be a decreasing sequence of positive numbers tending to 0.
Then there exists $ x \in X$ and $n_o \in \mathbb{N}$ such that for any $ n \geq n_o$
$$
\frac{e_n}{3} \leq dist(x,V_n) \leq 3 e_n.
$$ 
\end{thm}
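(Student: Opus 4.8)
The plan is to deduce Theorem~\ref{lconvex} from the main theorem, Theorem~\ref{main}, by verifying its nondegeneracy hypothesis \ref{nondegenerate}, namely $d_{\mathcal{V}}>0$, for a locally convex Fréchet space; the bridge from local convexity is the structural result \cite{Rol}, p.~96, Th.~3.2.2, to be used exactly as Theorem~\ref{lbounded} used the Aoki--Rolewicz theorem \cite{Rol}, p.~95, Th.~3.2.1. It is worth isolating the scheme of the proof of Theorem~\ref{lbounded}, since I would imitate it verbatim. One passes to an equivalent $F$-norm $\|\cdot\|_1$ of the homogeneous type; for it, $s$-convexity gives $\rho_{n,1}(tx)=|t|^s\,\rho_{n,1}(x)$ and hence $d_{\mathcal{V},1}=\infty$; Lemma~\ref{eq} then transfers nondegeneracy back, forcing $d_{\mathcal{V}}>0$ for the original $F$-norm; and finally Theorem~\ref{main}, applied to $(X,\|\cdot\|)$ itself, yields the estimate $\frac{e_n}{3}\le dist(x,V_n)\le 3e_n$ for $n\ge n_o$. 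I would stress that Lemma~\ref{eq} is invoked only to propagate the qualitative dichotomy $d_{\mathcal{V}}=0$ versus $d_{\mathcal{V}}>0$ across equivalent norms; it does not, and need not, preserve the quantitative two-sided bound, because that bound is produced by Theorem~\ref{main} directly in the original norm.

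Concretely, I would proceed in three steps. First, invoke \cite{Rol}, p.~96, Th.~3.2.2 to extract, from the local convexity of $(X,\|\cdot\|)$, the structural input that the locally bounded argument obtained from Th.~3.2.1: an equivalent $F$-norm $\|\cdot\|_1$ to which Theorem~\ref{main1} applies, so that the associated parameter satisfies $d_{\mathcal{V},1}=\infty$. Second, apply Lemma~\ref{eq} to conclude $d_{\mathcal{V}}>0$ for the given $F$-norm. Third, with hypothesis \ref{nondegenerate} now in force and $\{e_n\}$ a decreasing sequence of positive numbers tending to $0$, apply Theorem~\ref{main} to obtain the element $x\in X$ and the index $n_o\in\mathbb{N}$ delivering $\frac{e_n}{3}\le dist(x,V_n)\le 3e_n$ for all $n\ge n_o$.

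I expect the single genuine obstacle to be the first step: extracting from \cite{Rol}, Th.~3.2.2 precisely the homogeneity required to feed Theorem~\ref{main1}, and thereby to activate Lemma~\ref{eq}. Everything downstream is automatic — the equivalence-invariance of the vanishing of $d_{\mathcal{V}}$ supplied by Lemma~\ref{eq}, the monotonicity of $\{e_n\}$, and the conclusion of Theorem~\ref{main} — so that the proof collapses to correctly citing and applying the structural theorem. This is also the step where one must be careful about which locally convex Fréchet spaces the cited theorem actually covers, since it is exactly there that the positivity of $d_{\mathcal{V}}$, and hence the validity of the two-sided estimate, is decided.
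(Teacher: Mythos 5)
Your proposal follows the paper's own route exactly: the paper derives Theorem~\ref{lconvex} in a single line, ``applying Theorem~\ref{lbounded} and \cite{Rol}, p.~96, Th.~3.2.2,'' which unwinds to precisely your three steps (pass to an equivalent homogeneous norm, conclude $d_{\mathcal{V},1}=\infty$, transfer to $d_{\mathcal{V}}>0$ via Lemma~\ref{eq}, then apply Theorem~\ref{main} in the original norm). Your insistence that Lemma~\ref{eq} only propagates the dichotomy $d_{\mathcal{V}}=0$ versus $d_{\mathcal{V}}>0$, with the quantitative two-sided bound coming from Theorem~\ref{main} applied to the \emph{original} $F$-norm, is in fact the correct way to read the paper's proof of Theorem~\ref{lbounded} (which nominally cites Theorem~\ref{main1}), since equivalent $F$-norms need not preserve distances. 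The caveat you isolate at your first step is the genuine one: Rolewicz's Th.~3.2.2 supplies a homogeneous norm for locally convex spaces that are \emph{also locally bounded}, and without local boundedness the statement collides with the paper's own example of the sequence space with $F$-norm $\sum_i 2^{-i}|x_i|/(1+|x_i|)$, which is locally convex yet has $d_{\mathcal{V}}=0$ and violates the lower estimate for slowly decaying $e_n$; this, however, is a defect of the theorem as stated in the paper rather than of your argument relative to it.
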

Now we state a theorem concerning Fr\'echet spaces in which the topology is determined by a sequence of $ p_n$-homogenous norms.
\begin{thm}
\label{pnnorms} Let $X$ be an infinite dimensional linear space and let $ \{\| \cdot \|_n\}$ be a family of $p_n$-homogenous $F$-pseudonorms which is total over $X,$ i.e for any $x\in X,$ if $ \| x\|_n =0$  for any $n \in \mathbb{N},$
then $x=0.$
Define on $X$ an F-norm $ \| \cdot \|$ by
$$ 
\| x\| = \sum_{j=1}^{\infty} \frac{\|x\|_n}{2^n(1+ \|x\|_n)}.
$$
Assume that $ (X, \| \cdot \|)$ is a Fr\'echet space. Let $\mathcal{V} = \{  V_n\} $ be a nested sequence of linear subspaces of $X$ satisfying 
$\overline{V_n} \subseteq V_{n+1}$ such that 
$$
X=\overline{\bigcup_{n=1}^{\infty}V_{n}}.
$$
Assume that there exists $ N \in \mathbb{N}$ such that for any $ n \in \mathbb{N}$ we can find $x_n \in V_{n+1}$ such that
$$
dist_j (x_n,V_n) = \inf \{ \| x_n -v \|_j : v \in V_n \} > 0
$$
for some $j \in \{ 1,...,N\}.$ 
Then $ d_{\mathcal{V}} > 0.$ 
\end{thm}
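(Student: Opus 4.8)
The plan is to establish the single uniform estimate $d_{n,\mathcal{V}} \geq 2^{-(N+1)}$ valid for every $n$, whence $d_{\mathcal{V}} = \inf_n d_{n,\mathcal{V}} \geq 2^{-(N+1)} > 0$ immediately. The driving observation is that the $p_j$-homogeneity of each pseudonorm $\|\cdot\|_j$ allows us to inflate the given vector $x_n$ so that its $j$-th pseudonorm distance to $V_n$ becomes as large as we like, and a single term of the series defining $\|\cdot\|$ already contributes a fixed positive amount once that pseudonorm distance exceeds $1$.

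First I would record the homogeneity of $dist_j$. Since each $V_n$ is a linear subspace and $\|ty\|_j = |t|^{p_j}\|y\|_j$, the substitution $v = tw$ gives, for $t \neq 0$,
\[
dist_j(tx, V_n) = |t|^{p_j}\, dist_j(x, V_n).
\]
Now fix $n$ and take $x_n \in V_{n+1}$ together with an index $j = j(n) \in \{1,\dots,N\}$ for which $c_n := dist_j(x_n, V_n) > 0$, as guaranteed by hypothesis. Choose a scalar $t_n$ with $|t_n|^{p_j} c_n \geq 1$; this is possible precisely because $c_n > 0$. Then for every $v \in V_n$,
\[
\|t_n x_n - v\|_j \;\geq\; dist_j(t_n x_n, V_n) \;=\; |t_n|^{p_j} c_n \;\geq\; 1.
\]

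Next I would isolate the $j$-th summand of the F-norm. Because $s \mapsto s/(1+s)$ is increasing and $\|t_n x_n - v\|_j \geq 1$, that summand alone yields
\[
\|t_n x_n - v\| \;\geq\; \frac{1}{2^{j}}\cdot\frac{\|t_n x_n - v\|_j}{1+\|t_n x_n - v\|_j}\;\geq\;\frac{1}{2^{j}}\cdot\frac12 \;=\; \frac{1}{2^{j+1}}\;\geq\;\frac{1}{2^{N+1}},
\]
the last step using $j \leq N$. Taking the infimum over $v \in V_n$ gives $\rho_n(t_n x_n) \geq 2^{-(N+1)}$, and since $t_n x_n \in V_{n+1}$ (as $V_{n+1}$ is a subspace), this forces $d_{n,\mathcal{V}} \geq 2^{-(N+1)}$. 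As $n$ was arbitrary, $d_{\mathcal{V}} \geq 2^{-(N+1)} > 0$.

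The only point needing care — and the precise reason the global bound $N$ appears in the hypothesis — is that the scaling factor $t_n$ depends on $n$ through $c_n$, so no single universal $t$ can be expected; the argument must be run separately for each $n$, which is legitimate because the resulting lower bound $2^{-(N+1)}$ is independent of $n$. The finiteness of the admissible index set $\{1,\dots,N\}$ is exactly what keeps the per-term constant $2^{-(j+1)}$ bounded away from zero uniformly; were the index $j(n)$ allowed to grow with $n$, the estimate would collapse. I expect this uniformity in $j$ to be the substantive content, the remaining steps being routine homogeneity and monotonicity computations.
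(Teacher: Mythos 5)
Your argument is correct and follows essentially the same route as the paper: use the $p_j$-homogeneity to scale $x_n$ so that the $j$-th pseudonorm distance is large, isolate the $j$-th summand of the series defining $\|\cdot\|$, and invoke the monotonicity of $s\mapsto s/(1+s)$ to get a lower bound depending only on $N$. The only difference is cosmetic: the paper lets $t\to\infty$ to obtain $d_{n,\mathcal{V}}\geq 2^{-N}$, while you fix a finite $t_n$ and settle for $2^{-(N+1)}$, which is equally sufficient (and your write-up actually makes explicit the single-summand estimate that the paper's terse displayed inequality glosses over).
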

\begin{proof}
By our assumptions for any $n \in \mathbb{N}$  and $ t \in \mathbb{R}_+,$ 
$$
\rho_n(tx_n) \geq dist_j(tx_n,V_n)  = t^{p_j} dist_j(x_n, V_n)
$$
for some $j \in \{ 1,...,N\}.$
Hence $ d_{n,\mathcal{V}} \geq \displaystyle \frac{1}{2^N}$ for any $ n \in \mathbb{N}$ which shows that $ d_{\mathcal{V}} \geq \displaystyle \frac{1}{2^N}.$
\end{proof}
\begin{exm}
Let $ \{g_n\} \subset C^{\infty}[0,1]$ be a fixed sequence of orthonormal functions with respect to the scalar product
$ \langle f,g \rangle = \int_0^1 f(t)g(t) dt.$ 
Let 
$$ 
V_n = span[g_1,g_2,..., g_{2n-1}, g_{2n},g_{2n+2},...,g_{2(n+k)}: k \in \mathbb{N}].
$$
Let us equip $X$ with the F-norm $ \| \cdot \|$ given by
$$ 
\| x\| = \sum_{j=1}^{\infty} \frac{\|x\|_n}{2^n(1+ \|x\|_n)},
$$
where $ \| x\|_n = \sup \{ |x^{(n)}(t)|: t \in [0,1] \}.$
Notice that for any $ n \in \mathbb{N},$ and $ v \in V_n,$ 
$$ 
\| g_{2n+1} - v\|_1 \geq (\int_0^1 (g_{2n+1}-v)^2 dt)^{1/2} \geq (\int_0^1 (g_{2n+1})^2 dt)^{1/2} =1.
$$
Hence in this case the assumptions of Theorem \ref{pnnorms} are satisfied with $N=1.$ Consequently, $ d_{\mathcal{V}} > 0$ and by Theorem \ref{main} for any decreasing sequence of positive numbers $ \{ e_n\}$ tending to $0$ there exists $ n_o \in \mathbb{N}$ and $ x \in X$ such that for any $ n \geq n_o$
$$
\frac{e_n}{3} \leq dist(x,V_n) \leq 3 e_n.
$$

\end{exm}


\noindent
\mbox{~~~~~~~}Asuman G\"{u}ven AKSOY\\
\mbox{~~~~~~~}Claremont McKenna College\\
\mbox{~~~~~~~}Department of Mathematics\\
\mbox{~~~~~~~}Claremont, CA  91711, USA \\
\mbox{~~~~~~~}E-mail: aaksoy@cmc.edu \\ \\

\noindent
\mbox{~~~~~~~}Grzegorz LEWICKI\\
\mbox{~~~~~~~}Jagiellonian University\\
\mbox{~~~~~~~}Department of Mathematics\\
\mbox{~~~~~~~}\L ojasiewicza 6, 30-348, Poland\\
\mbox{~~~~~~~}E-mail: Grzegorz.Lewicki@im.uj.edu.pl\\\\

\end{document}